\newtheorem{theorem}{Theorem}[section]
\newtheorem{lemma}[theorem]{Lemma}
\newtheorem{proposition}[theorem]{Proposition}
\newtheorem{corollary}[theorem]{Corollary}
\newtheorem{definition}[theorem]{Definition}
\newtheorem{observation}[theorem]{Observation}
\newtheorem{remark}[theorem]{Remark}
\newtheorem{example}[theorem]{Example}
\newtheorem*{NPprob}{Problem}
\newtheorem{question}[theorem]{Question}
\newtheorem{problem}[theorem]{Problem}
\newtheorem{conjecture}[theorem]{Conjecture}
\newenvironment{thm}{\begin{theorem}}{\end{theorem}}
\newenvironment{lem}{\begin{lemma}}{\end{lemma}}
\newenvironment{prop}{\begin{proposition}}{\end{proposition}}
\newenvironment{cor}{\begin{corollary}}{\end{corollary}}
\newenvironment{rem}{\begin{remark}\bgroup\rm }{\egroup\end{remark}}
\newenvironment{ex}{\begin{example}\bgroup\rm }{\egroup\end{example}}
\newenvironment{qstn}{\begin{question}\bgroup\rm }{\egroup\end{question}}
\newcommand{\spec}{\operatorname{spec}}
\newcommand{\match}{\alpha'}
\newcommand{\tr}{\operatorname{tr}}
\newcommand{\diam}{\operatorname{diam}}
\newcommand{\dunion}{\dot{\cup}}
\newcommand{\lam}{\lambda}
\newcommand{\bit}{\begin{itemize}}
\newcommand{\eit}{\end{itemize}}
\newcommand{\ben}{\begin{enumerate}}
\newcommand{\een}{\end{enumerate}}
\newcommand{\beq}{\begin{equation}}
\newcommand{\eeq}{\end{equation}}
\newcommand{\bea}{\begin{eqnarray*}}
\newcommand{\eea}{\end{eqnarray*}}
\newcommand{\bpf}{\begin{proof}}
\newcommand{\epf}{\end{proof}\ms}
\newcommand{\ms}{\medskip}
\newcommand{\du}{\,\dot{\cup}\,}
\title{Note on von Neumann and R\'enyi entropies of a Graph}
\author{
Michael Dairyko\thanks{Department of Mathematics, Iowa State University, Ames, IA 50011, USA. (mdairyko@iastate.edu, hogben@iastate.edu, chlin@iastate.edu,  myoung@iastate.edu)} \and
Leslie Hogben\footnotemark[1]\phantom{$^*$}\thanks{American Institute of
Mathematics, 600 E. Brokaw Rd., San Jose, CA 95112, USA. (hogben@aimath.org)} \and 
Jephian C.-H.~Lin\footnotemark[1] \and 
Joshua Lockhart  \thanks{Department of Computer Science, University College London, Gower Street, London WC1E 6BT, United Kingdom. (joshua.lockhart.14@ucl.ac.uk, davideroberson@gmail.com, simoseve@gmail.com)}\and
David Roberson\footnotemark[3] \and
Simone Severini\footnotemark[3] \and 
Michael Young\footnotemark[1]
}
\begin{document}
\maketitle

\vspace{-10pt}
\begin{abstract} 
We conjecture that all connected graphs of order $n$ have von Neumann entropy at least as great as the star $K_{1,n-1}$ and prove this for  almost all graphs of order $n$.  We show that connected graphs of order $n$ have R\'enyi 2-entropy at least as great as  $K_{1,n-1}$ and for $\alpha>1$, $K_n$  maximizes R\'enyi $\alpha$-entropy     over graphs of order $n$. We show that adding an edge to a graph can lower its von Neumann entropy. \end{abstract}

\textbf{Key words.} entropy, quantum, Laplacian, graph, matrix

\textbf{Subject classifications.} 05C50, 81P45, 94A17


\section{Introduction}\label{sintro} 

 In the density matrix formulation of quantum mechanics, the state of a physical system is represented by a positive semi-definite matrix with unit trace called its
\emph{density matrix}. The \emph{von Neumann entropy} of a quantum state is defined in terms of the eigenvalues of its density matrix, and provides a means of characterizing its information content, in analogy to the Shannon entropy of a statistical ensemble from classical information theory. Indeed, the von Neumann entropy of a state takes center stage in the burgeoning field of quantum information theory \cite{NC}.

It is well known that the combinatorial Laplacian matrix $L$ of a finite simple graph is positive semi-definite, and so the matrix $\frac 1 {\tr L}L$ (which has unit trace) can be interpreted as the density matrix of a physical system. It is therefore natural to interpret the von Neumann entropy of such a density matrix as the von Neumann entropy of the corresponding graph, with a view towards characterizing the information content of the graph \cite{BGS06,PS08}.

In this note we study graphs that minimize or maximize von Neumann entropy and its well known generalization, the R\'enyi $\alpha$-entropy, over (connected) graphs of fixed order.  We show (Theorem \ref{densitythm}) that almost all graphs of order $n$ have von Neumann entropy at least as great as the star $K_{1,n-1}$, all connected graphs of order $n$ have R\'enyi 2-entropy at least as great as  $K_{1,n-1}$ (Theorem \ref{2Renstarmin}), and for $\alpha>1$ all graphs of order $n$ have R\'enyi $\alpha$-entropy no greater than that of the complete graph $K_n$ (Corollary \ref{cor:Rmax}); it is known that $K_n$ maximizes the von Neumann entropy.   We also answer negatively a question from \cite{PS08} about the effect of adding an edge on von Neumann entropy (Proposition \ref{addedgelowerent}).  The von Neumann entropy and R\'enyi $\alpha$-entropies of a graph are defined precisely below. 

\ms 

The  {\em Shannon entropy} of a discrete probability distribution $p = (p_1, \ldots, p_n)$ is defined to be \[S(p):=\sum_{i=1}^n p_i\log_2\frac{1}{p_i}=-\sum_{i=1}^n p_i\log_2 {p_i},\]
with $0\log_20$ defined to be zero.
Let $G$ be a graph that has at least one edge.  Consider the (combinatorial) Laplacian scaled to have trace one, $\rho(G):=\frac 1{\tr L(G)}L(G)$, where  $L(G)=D(G)-A(G)$ with $D(G)$ the diagonal matrix of degrees and $A(G)$ the adjacency matrix.  The \textit{von Neumann entropy} of $G$ is defined to be the Shannon entropy of the probability distribution represented by the eigenvalues of $\rho(G)$,
\[S(G):=\sum_{i=1}^n\lambda_i\log_2\frac{1}{\lambda_i}=-\sum_{i=1}^n\lambda_i\log_2 {\lambda_i},\]
where $\{\lambda_i\}_{i=1}^n$ is the spectrum of $\rho(G)$ (multiset of eigenvalues), which is denoted by $\spec(\rho(G))$.  

For $\alpha \ge 0$ and $\alpha \ne 1$, the R\'enyi entropy of a discrete probability distribution $p = (p_1, \ldots, p_n)$ is defined as\[H_\alpha(p) = \frac{1}{1-\alpha} \log_2\left(\sum_{i = 1}^n p_i^\alpha \right);\]
this is also called the R\'enyi $\alpha$-entropy. The limit as $\alpha \to 1$ of $H_\alpha(p)$ is the Shannon entropy $S(p)$, so  as done in \cite{Renyi} we define $H_1(p)=S(p)$. Since $H_\alpha(p)$ is a non-increasing function of $\alpha$  for a fixed $p$  \cite{RenyiXE},  $S(p)\ge H_\alpha(p)$ for $\alpha\ge  1$.  

For a positive semidefinite matrix $M$ with trace 1, we  define $S(M)$ (respectively,  $H_\alpha(M)$) to be equal to the Shannon entropy (respectively, the R\'enyi $\alpha$-entropy) of the probability distribution given by the eigenvalues of $M$.  For a graph $G$, we  define $H_\alpha(G)=H_\alpha(\rho(G))$,  the R\'enyi $\alpha$-entropy of the scaled Laplacian.  The R\'enyi 2-entropy  is a useful tool in the study of von Neumann entropy, and R\'enyi $\alpha$-entropy is interesting in its own right.
\ms

The graphs realizing the minimum and maximum von Neumann entropy over all graphs on $n$ vertices are known, but  minimizing over connected graphs is still an open question.  A graph $G$ has zero von Neumann entropy if and only if 
one eigenvalue is 1 and the rest are 0.  These spectra are achieved only by graphs of the form 
$K_2\dunion \overline{K_{n-2}}$.

\begin{prop}\label{Smaxmin}
{\rm \cite{BGS06}} 
For all graphs on $n$ vertices, the maximum von Neumann entropy is attained by $K_n$ with $S(K_n)=\log_2(n-1)$,  and the minimum von Neumann entropy of $0$ is attained by $K_2\dunion \overline{K_{n-2}}$. 
\end{prop}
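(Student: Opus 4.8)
The plan is to handle the maximum and the minimum separately, in each case first proving a bound valid for every graph $G$ with at least one edge and then checking that the claimed graph is extremal. Throughout I write $\mu_1,\dots,\mu_n$ for the eigenvalues of $L(G)$, so that the entries of $\spec(\rho(G))$ are the numbers $\mu_i/\tr L(G)$, a genuine probability distribution since $L(G)$ is positive semidefinite and $\tr L(G)=2m>0$, where $m$ is the number of edges.

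For the maximum, the key observation is that the all-ones vector lies in the kernel of every graph Laplacian, so $\rho(G)$ always has $0$ as an eigenvalue and therefore at most $n-1$ positive eigenvalues. Since the Shannon entropy of a probability distribution supported on at most $k$ points is at most $\log_2 k$ — this follows from Jensen's inequality applied to the concave logarithm, with equality if and only if the distribution is uniform on exactly $k$ points — we obtain $S(G)\le\log_2(n-1)$ for every $G$. To see $K_n$ meets the bound, note its Laplacian spectrum is $0$ together with $n$ of multiplicity $n-1$, so $\rho(K_n)$ has eigenvalues $0$ and $\tfrac1{n-1}$ (multiplicity $n-1$), giving $S(K_n)=\log_2(n-1)$. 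For uniqueness of the maximizer one observes that equality forces $\rho(G)$ to have spectrum $\{0,(\tfrac1{n-1})^{(n-1)}\}$, i.e.\ $L(G)$ has a simple zero eigenvalue and all remaining eigenvalues equal to a common $c>0$; hence $L(G)=c\!\left(I-\tfrac1n J\right)$, and reading off the off-diagonal entries shows every pair of vertices is adjacent and $c=n$, so $G=K_n$.

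For the minimum I would use that Shannon entropy is always nonnegative and vanishes exactly when the distribution is a point mass. Thus $S(G)=0$ if and only if $\rho(G)$ has a single nonzero eigenvalue, necessarily equal to $1$, which is equivalent to $L(G)$ having rank $1$. Since the rank of a graph Laplacian equals $n$ minus the number of connected components, rank $1$ means $G$ has exactly $n-1$ components; the only graph on $n$ vertices with $n-1$ components is a single edge together with $n-2$ isolated vertices, namely $K_2\dunion\overline{K_{n-2}}$, which indeed realizes entropy $0$.

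The inequalities $S(G)\ge 0$ and $S(G)\le\log_2(n-1)$ are soft, so the real content lies in the two equality characterizations. I expect the uniqueness of the maximizer to require the most care: recognizing that $n-1$ equal nonzero Laplacian eigenvalues force $L(G)=c\!\left(I-\tfrac1n J\right)$, and then combining this with the $\{0,1\}$ constraint on the adjacency entries to conclude $c=n$ and $G=K_n$, is the one step that goes beyond purely formal manipulation of the entropy.
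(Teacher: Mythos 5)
Your proof is correct. Note, however, that the paper does not actually prove this proposition --- it is quoted from the reference [BGS06] --- so there is no in-paper argument to compare against; the closest thing is the two sentences preceding the proposition, which sketch exactly your minimum-entropy argument (zero entropy iff the spectrum of $\rho(G)$ is one eigenvalue $1$ and the rest $0$, realized only by $K_2\,\dot{\cup}\,\overline{K_{n-2}}$). Your treatment fills in the details of that sketch correctly (rank-one Laplacian, rank equals $n$ minus the number of components), and your maximum argument is the standard one: the all-ones kernel vector caps the support at $n-1$ eigenvalues, Jensen gives $S(G)\le\log_2(n-1)$, and $K_n$ attains it. You in fact prove slightly more than the proposition states, namely uniqueness of both extremizers: the equality analysis forcing $L(G)=c\left(I-\tfrac1n J\right)$ and then $c=n$ from the $\{0,-1\}$ constraint on off-diagonal Laplacian entries is sound, and the uniqueness of the minimizer is the fact the paper later relies on in Corollary \ref{cor:Rmax}. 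The only hypothesis worth stating explicitly is that $G$ has at least one edge (so that $\rho(G)$ is defined), which you do address when noting $\tr L(G)=2m>0$.
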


In \cite{PS08} it was asked whether the star minimizes von Neumann entropy among connected graphs of fixed order, and we conjecture this.
\begin{conjecture}\label{Neumannmin}
For all connected graphs on $n$ vertices, the minimum von Neumann entropy is attained by $K_{1,n-1}$.
\end{conjecture}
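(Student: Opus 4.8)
The plan is to split connected graphs by their number of edges $m$, dispatch the edge-rich graphs (which include almost all graphs, and hence recover the spirit of Theorem \ref{densitythm}) with a crude spectral bound, and isolate the genuinely hard sparse regime. For the easy part I would use the elementary inequality $S(p)\ge \log_2\frac{1}{\max_i p_i}$, valid for any probability distribution since $\log_2\frac1{p_i}\ge\log_2\frac1{\max_j p_j}$. Writing $\lambda_{\max}$ for the largest eigenvalue of $\rho(G)$ and $\mu_{\max}$ for that of $L(G)$, and using the standard fact $\mu_{\max}\le n$ for a graph on $n$ vertices together with $\tr L(G)=2m$, this gives
\[
S(G)\ \ge\ \log_2\frac{1}{\lambda_{\max}}\ =\ \log_2\frac{2m}{\mu_{\max}}\ \ge\ \log_2\frac{2m}{n}.
\]
So it suffices to determine when the right-hand side already exceeds $S(K_{1,n-1})$.

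A short computation with the star's scaled Laplacian eigenvalues $0$, $\frac{1}{2(n-1)}$ (with multiplicity $n-2$), and $\frac{n}{2(n-1)}$ gives $S(K_{1,n-1})=1+\tfrac12\log_2 n+o(1)$. Comparing with the bound above, $\log_2\frac{2m}{n}\ge 1+\tfrac12\log_2 n$ holds as soon as $m\gtrsim n^{3/2}$. Thus every connected graph with at least about $n^{3/2}$ edges automatically has entropy at least that of the star; since a typical graph has $\Theta(n^2)$ edges, this settles the conjecture for almost all graphs and reduces it to the sparse range $n-1\le m\lesssim n^{3/2}$.

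The sparse regime, and trees in particular, is where the real difficulty lies, and I do not expect any of the soft tools to suffice. The crude bound is useless here: at $m=n-1$ it returns only $\log_2\frac{2(n-1)}{n}\approx 1$, far below the target $1+\tfrac12\log_2 n$. The R\'enyi-$2$ route (via $S(G)\ge H_2(G)$ and star-minimality of $H_2$, which depends only on the degree sequence through $\tr L^2=\sum_i d_i^2+2m$) is also too weak, since $H_2(K_{1,n-1})=\log_2\frac{4(n-1)}{n+2}\to 2$ while $S(K_{1,n-1})\to\infty$, a gap growing like $\tfrac12\log_2 n$. Majorization fails outright: already for $n=4$ the star and the path have incomparable Laplacian spectra. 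My proposed attack on the tree case is therefore structural: a tree other than $K_{1,n-1}$ has diameter at least $3$ and hence at least four distinct Laplacian eigenvalues (against the star's three), forcing its scaled spectrum to be strictly more spread out; one then tries to convert this forced spread into a strict entropy increase, either directly or by exhibiting a sequence of ``grafting'' moves toward the star along which $S$ decreases. Non-tree sparse graphs would be handled by comparison to a spanning tree together with the improving crude bound as $m$ grows.

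The main obstacle is exactly this last conversion. Von Neumann entropy is a transcendental, fully spectral functional with no monotone behaviour under natural edge operations --- indeed Proposition \ref{addedgelowerent} shows that adding an edge can \emph{lower} $S$ --- so neither a clean majorization order nor an obviously entropy-monotone tree transformation is available, and the two low moments controlled by the degree sequence ($\tr L$ and $\tr L^2$) do not determine $S$. Establishing that the star strictly minimizes $S$ over all trees, with control quantitative enough to then absorb the remaining sparse non-trees, is the crux that keeps the statement a conjecture.
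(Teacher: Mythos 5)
You were asked to prove Conjecture \ref{Neumannmin}, and the first thing to say is that the paper itself does not prove it: it is stated as a conjecture, supported only by computation (up to $n=8$, and $n=15$ for trees) and by partial results, so there is no complete argument for yours to be measured against. Your proposal, as you yourself acknowledge, is also not a proof; what it actually establishes is the dense case, and that part is correct. The chain $S(G)\ge\log_2(1/\lambda_{\max})$, $\lambda_{\max}=\mu_{\max}/(2m)\le n/(2m)$, together with $S(K_{1,n-1})=1+\log_2(n-1)-\frac{n}{2(n-1)}\log_2 n<1+\frac12\log_2 n$, gives $S(G)>S(K_{1,n-1})$ for every graph with $m\ge n^{3/2}$ edges, which indeed covers almost all graphs. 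This is a genuinely different and more elementary route to essentially the paper's Theorem \ref{densitythm}: there the tool is $S(G)\ge H_2(G)$ combined with de Caen's bound $\sum_i d_i^2\le m\left(\frac{2m}{n-1}+n-2\right)$, which yields the slightly better threshold $m\ge\frac{n(\sqrt{n}+1)}{2}$; both thresholds are $\Theta(n^{3/2})$, and each buys the ``almost all graphs'' statement. Your max-eigenvalue bound avoids any degree-sequence inequality, at the cost of a factor of about $2$ in the threshold.

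The genuine gap is exactly the sparse regime, and your diagnosis of why it is hard matches the paper's situation precisely. The paper's strongest unconditional result toward the conjecture is Theorem \ref{2Renstarmin}, that $K_{1,n-1}$ uniquely minimizes $H_2$ over connected graphs (proved via degree-sequence majorization for trees, Proposition \ref{treeR2star}, plus the edge-addition Lemma \ref{addedgelem}); as you correctly compute, this cannot close the gap for $S$, since $H_2(K_{1,n-1})=\log_2\frac{4(n-1)}{n+2}\to 2$ while $S(K_{1,n-1})\sim 1+\frac12\log_2 n$. Your proposed structural attack on trees (diameter at least $3$ forces at least four distinct Laplacian eigenvalues, then convert this spread into a strict entropy increase) is only a sketch, and the conversion step is where it would have to contend with the fact that $S$ is not monotone under natural local moves: Proposition \ref{addedgelowerent} shows adding an edge can lower $S$, which also undercuts your plan to handle sparse non-trees ``by comparison to a spanning tree'' --- that comparison works for $H_2$ (this is exactly Lemma \ref{addedgelem}, where $\tr_2$ behaves monotonically), but no analogue for $S$ is known. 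So: a correct partial result by a different method, a correct identification of the obstruction, but the statement itself remains unproven in your proposal just as it does in the paper.
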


This conjecture is confirmed by \textit{Sage} up to $8$ vertices; when $G$ is restricted to being  a tree, it is true up to $15$ vertices \cite{sagedata1}.  In Theorem \ref{densitythm} we show that it is true for almost all graphs as $n\to\infty$ by use of the R\'enyi 2-entropy.  We also make a conjecture about trees.
\begin{conjecture}
For all trees on $n$ vertices, the maximum von Neumann entropy is attained by $P_n$.
\end{conjecture}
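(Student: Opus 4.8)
The plan is to reduce the conjecture to an extremal problem about Laplacian spectra and then attack that by a graph transformation. Every tree on $n$ vertices has exactly $n-1$ edges, so the normalizing trace $\tr L(T)=2(n-1)$ is the same for all of them. Writing $\mu_1,\dots,\mu_n$ for the Laplacian eigenvalues of $T$ and $\lambda_i=\mu_i/(2(n-1))$ for those of $\rho(T)$, a one-line computation gives
\[
S(T)=\log_2\!\bigl(2(n-1)\bigr)-\frac{1}{2(n-1)}\sum_{i=1}^{n}\mu_i\log_2\mu_i .
\]
Thus, over trees, maximizing $S$ is \emph{exactly} minimizing the functional $f(\mu):=\sum_i \mu_i\log_2\mu_i$ on the set of tree Laplacian spectra. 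Two invariants are pinned down across all trees: $\sum_i\mu_i=2(n-1)$, and, by the Matrix--Tree theorem applied to a graph with a unique spanning tree, $\prod_{\mu_i\neq 0}\mu_i=n$. So I am minimizing a convex symmetric functional over spectra with fixed arithmetic and geometric means of the nonzero part, and the combinatorial constraint of actually being a tree spectrum must do the work, since the continuous relaxation will not single out $P_n$.

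Because $x\log_2 x$ is convex, $f$ is Schur-convex, so my first instinct is to prove that every tree spectrum majorizes that of $P_n$; this would give $f(T)\ge f(P_n)$ at once. This cannot be made to work, even locally. Already at $n=4$ the spectra $\{4,1,1,0\}$ of $K_{1,3}$ and $\{2+\sqrt2,\,2,\,2-\sqrt2,\,0\}$ of $P_4$ are majorization-incomparable (their largest eigenvalues give $4>2+\sqrt2$, while their first two give $5<4+\sqrt2$), and this is the \emph{only} graft available at $n=4$. Hence Schur-convexity is unavailable even one step at a time, and any proof must exploit the specific function $x\log_2 x$ rather than pure majorization.

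The route I would pursue is an extremal-transformation argument. Any tree other than $P_n$ has a vertex of degree at least three, and a standard grafting move (relocating a pendant branch so as to lengthen a longest path and reduce branching) converts any tree into $P_n$ in finitely many steps; it therefore suffices to show that a single graft strictly decreases $f$. To track $f$ under a graft I would use the analytic representation
\[
\sum_{i=1}^{n}\mu_i\ln\mu_i=\int_0^{\infty}\!\left(\frac{2(n-1)}{1+t}-n+t\,\tr\!\left(L+tI\right)^{-1}\right)dt ,
\]
which rewrites $f$ as a positively weighted integral of the resolvent trace $\tr(L+tI)^{-1}=\tfrac{d}{dt}\log\det(L+tI)$. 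The quantity $\det(L+tI)=\prod_i(\mu_i+t)$ is, up to normalization, the Laplacian characteristic polynomial, whose coefficients are the elementary symmetric functions $e_k(\mu)$; these carry combinatorial (spanning-forest) meaning and can be controlled under grafting via comparison of characteristic polynomials. The plan is to show each graft moves the $e_k(\mu)$ in a definite direction and to feed this through the integral so as to force $f$ down, strictly, at every step.

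The main obstacle is precisely this last conversion. Since $x\log_2 x$ is not a symmetric polynomial, monotonicity of the coefficients $e_k(\mu)$ does not by itself fix the sign of $\Delta f$; one must control the sign of the integrand for all $t>0$, which is most delicate near $t=0$, where the low-degree coefficients dominate and the integrand tends to the finite value $n-1$. A secondary obstacle is promoting the inequality to strictness so that $P_n$ is the \emph{unique} maximizer. As evidence that the approach should succeed, the analogous statement for the R\'enyi $2$-entropy is elementary: from $\tr L^2=\sum_i d_i^2+2(n-1)$ and the fact that $P_n$ uniquely minimizes $\sum_i d_i^2$ among trees, $P_n$ uniquely maximizes $H_2$. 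This is exactly the coordinate $e_2(\mu)$ of the hierarchy the integral representation weaves together, so the von Neumann case should be provable by extending control from $e_2$ to the full family of coefficients.
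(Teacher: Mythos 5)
The first thing to note is that the paper does not prove this statement at all: it appears there only as a conjecture, supported by \emph{Sage} computations for trees on up to $15$ vertices \cite{sagedata1}. So there is no proof of record to compare yours against; the only question is whether your argument actually closes the problem, and it does not.

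Your preliminary reductions are correct: the identity $S(T)=\log_2(2(n-1))-\frac{1}{2(n-1)}\sum_i\mu_i\log_2\mu_i$, the two invariants $\sum_i\mu_i=2(n-1)$ and $\prod_{\mu_i\neq 0}\mu_i=n$ (Matrix--Tree), the observation that spectral majorization already fails for $K_{1,3}$ versus $P_4$ (so Schur-convexity cannot drive the argument), and the resolvent integral representation of $\sum_i\mu_i\ln\mu_i$ all check out; and the R\'enyi $2$-entropy analogue you invoke as evidence is indeed proved in the paper (Proposition \ref{treeR2star}), via majorization of \emph{degree sequences} rather than spectra. But the decisive step of your plan --- that a single grafting move strictly decreases $f(\mu)=\sum_i\mu_i\log_2\mu_i$ --- is never proved; it is only announced. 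Worse, you yourself identify why the proposed mechanism does not suffice: even if one knew that grafting moves every Laplacian coefficient $e_k(\mu)$ in a definite direction (a nontrivial claim you also do not establish), the integrand in your representation involves the logarithmic derivative $t\,P'(t)/P(t)$ of $P(t)=\det(L+tI)$, and coefficient-wise domination of $P$ does not yield a pointwise inequality for $t\,P'/P$, least of all near $t=0$ where you note the delicacy. So the chain ``graft $\Rightarrow$ $e_k$ monotone $\Rightarrow$ $\Delta f<0$'' has both links missing, and the second link is exactly the hard content of the conjecture. What you have is a reasonable research program --- reduce to grafting, control the characteristic polynomial, feed it through the integral --- but with its key lemma open, it is not a proof, which is consistent with the statement's status in the paper as an open conjecture.
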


This conjecture is confirmed by \textit{Sage} up to $15$ vertices \cite{sagedata1}.


It is well known (and easy to show) that \vspace{-3pt}
\[\spec(L(K_{a,b}))=\left\{a+b,b^{(a-1)},a^{(b-1)},0\right\},\vspace{-3pt}\]
where $\lambda^{(m)}$ denotes the fact that $\lambda$ has multiplicity $m$.
The next result then follows by computation.
\begin{prop}
\label{starprop}
For complete bipartite graphs, the von Neumann entropy is \vspace{-3pt}
\[S(K_{a,b}) = 1 + \frac{b+1}{2b}\log_2 a + \frac{a+1}{2a}\log_2 b - \frac{a+b}{2ab}\log_2(a+b).\vspace{-3pt}\]
Specifically, for stars\vspace{-3pt}
\[S(K_{1,n-1})=\log_2(2n-2)-\frac{n}{2n-2}\log_2 n.\vspace{-3pt}\]
\end{prop}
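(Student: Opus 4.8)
The plan is purely computational, building directly on the stated spectrum of $L(K_{a,b})$. First I would record that the eigenvalues of $L(K_{a,b})$ are $a+b$ (once), $b$ (with multiplicity $a-1$), $a$ (with multiplicity $b-1$), and $0$ (once), which accounts for all $a+b$ eigenvalues. Summing them gives $\tr L(K_{a,b}) = (a+b) + b(a-1) + a(b-1) = 2ab$ (equivalently, $\tr L = 2|E(K_{a,b})| = 2ab$). Hence the eigenvalues of the scaled Laplacian $\rho(K_{a,b}) = \frac{1}{2ab}L(K_{a,b})$ are $\frac{a+b}{2ab}$, the value $\frac{1}{2a}$ (multiplicity $a-1$), the value $\frac{1}{2b}$ (multiplicity $b-1$), and $0$.

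Next I would substitute these into $S(G) = -\sum_i \lambda_i \log_2\lambda_i$, using the convention $0\log_2 0 = 0$ so that the zero eigenvalue contributes nothing. This yields
\[S(K_{a,b}) = -\frac{a+b}{2ab}\log_2\frac{a+b}{2ab} + \frac{a-1}{2a}\log_2(2a) + \frac{b-1}{2b}\log_2(2b).\]
The one genuine manipulation is to expand each logarithm additively: $\log_2\frac{a+b}{2ab} = \log_2(a+b) - 1 - \log_2 a - \log_2 b$, together with $\log_2(2a) = 1 + \log_2 a$ and $\log_2(2b) = 1 + \log_2 b$. After this expansion the expression becomes a linear combination of $1$, $\log_2 a$, $\log_2 b$, and $\log_2(a+b)$ with rational coefficients.

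The bulk of the work is then collecting these four coefficients over the common denominator $2ab$. The constant term is $\frac{a+b}{2ab} + \frac{a-1}{2a} + \frac{b-1}{2b} = \frac{2ab}{2ab} = 1$; the coefficient of $\log_2 a$ is $\frac{a+b}{2ab} + \frac{a-1}{2a} = \frac{a(1+b)}{2ab} = \frac{b+1}{2b}$; by the symmetry $a\leftrightarrow b$ the coefficient of $\log_2 b$ is $\frac{a+1}{2a}$; and the coefficient of $\log_2(a+b)$ is $-\frac{a+b}{2ab}$. Assembling these recovers exactly the claimed formula for $S(K_{a,b})$.

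Finally, for the star I would set $a=1$ and $b=n-1$, so $a+b = n$. Then $\log_2 a = \log_2 1 = 0$ kills the first logarithmic term, the coefficient $\frac{a+1}{2a}$ of $\log_2 b = \log_2(n-1)$ collapses to $1$, and the last term becomes $-\frac{n}{2n-2}\log_2 n$. Combining the constant $1$ with $\log_2(n-1)$ via $1 + \log_2(n-1) = \log_2\bigl(2(n-1)\bigr) = \log_2(2n-2)$ gives $S(K_{1,n-1}) = \log_2(2n-2) - \frac{n}{2n-2}\log_2 n$, as asserted. There is no real obstacle beyond careful arithmetic; the only points requiring attention are the $0\log_2 0 = 0$ convention for the zero eigenvalue and the final simplification $1 + \log_2(n-1) = \log_2(2n-2)$.
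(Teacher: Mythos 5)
Your proposal is correct and follows exactly the route the paper intends: the paper states the spectrum $\spec(L(K_{a,b}))=\{a+b,\,b^{(a-1)},\,a^{(b-1)},\,0\}$ and remarks that the proposition "then follows by computation," which is precisely the computation you carry out (normalize by $\tr L = 2ab$, substitute into $-\sum_i \lambda_i\log_2\lambda_i$, and collect coefficients). Your arithmetic checks out, including the specialization $a=1$, $b=n-1$ for the star.
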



Building graphs from pieces is a standard technique and it is useful to have information about the effect  of graph operations and constructions on von Neumann entropy.  Let $G$ be a graph.  Define $d_G$ to be the sum of degrees of all vertices, which is equal to the trace of the  combinatorial Laplacian and also equal to twice the number of edges in $G$.  
In the case of a disjoint union, we can determine the von Neumann entropy of the whole exactly from the entropies of the pieces.

\begin{prop}
Let $G_1, \ldots, G_k$ vertex-disjoint graphs and let\vspace{-3pt}
\[c_i=\frac{d_{G_i}}{\sum_j d_{G_j}}.\vspace{-3pt}\]
Then \vspace{-3pt}
\[S\left(\dot{\bigcup}_{i=1}^k G_i\right) = \sum_{i=1}^k c_i S(G_i) + \sum_{i=1}^k c_i \log_2 \frac{1}{c_i}.\vspace{-3pt}\]
\end{prop}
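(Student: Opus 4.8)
The plan is to exploit the block-diagonal structure of the Laplacian under disjoint union and then unfold the definition of von Neumann entropy, taking care to track the trace-one normalization at each stage. First I would observe that the Laplacian of a disjoint union is the direct sum of the Laplacians of the pieces, $L(\dot{\bigcup}_i G_i) = \bigoplus_i L(G_i)$, so that $\tr L(\dot{\bigcup}_i G_i) = \sum_i d_{G_i}$. Writing $D = \sum_j d_{G_j}$, the scaled Laplacian factors blockwise as
\[
\rho\left(\dot{\bigcup}_{i=1}^k G_i\right) = \frac{1}{D}\bigoplus_{i=1}^k L(G_i) = \bigoplus_{i=1}^k \frac{d_{G_i}}{D}\cdot\frac{1}{d_{G_i}}L(G_i) = \bigoplus_{i=1}^k c_i\,\rho(G_i).
\]
The point to extract is that each block is exactly $c_i$ times the unit-trace density matrix $\rho(G_i)$ (here I tacitly assume each $G_i$ has an edge, so that $\rho(G_i)$ and $S(G_i)$ are defined). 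Hence the spectrum of the whole is the multiset $\{c_i\lambda_{i,\ell}\}$, where $\lambda_{i,1},\dots,\lambda_{i,n_i}$ denotes $\spec(\rho(G_i))$ and $n_i$ is the order of $G_i$.

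Next I would substitute these eigenvalues into the definition $S(M) = -\sum_\mu \mu\log_2\mu$ and split the logarithm of the product $c_i\lambda_{i,\ell}$. This separates the entropy into two double sums, one carrying the factor $\log_2 c_i$ and the other carrying $\log_2\lambda_{i,\ell}$:
\[
S\left(\dot{\bigcup}_{i=1}^k G_i\right) = -\sum_{i=1}^k\sum_{\ell=1}^{n_i} c_i\lambda_{i,\ell}\log_2 c_i \;-\; \sum_{i=1}^k\sum_{\ell=1}^{n_i} c_i\lambda_{i,\ell}\log_2 \lambda_{i,\ell}.
\]

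The crucial simplification — and the only place any care is needed — is the identity $\sum_{\ell} \lambda_{i,\ell} = \tr\rho(G_i) = 1$ for each $i$. Applying it to the first double sum collapses it to $-\sum_i c_i\log_2 c_i = \sum_i c_i\log_2\frac{1}{c_i}$, the Shannon entropy of the mixing distribution $(c_1,\dots,c_k)$; the inner sum of the second double sum is precisely $-\sum_\ell \lambda_{i,\ell}\log_2\lambda_{i,\ell} = S(G_i)$, so that term becomes $\sum_i c_i S(G_i)$. Adding the two yields the claimed identity.

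There is no genuine obstacle here: the result is a direct computation once the block structure is in hand. The only thing to remain vigilant about is the normalization, namely that the eigenvalue list of the union is $c_i\lambda_{i,\ell}$ rather than $\frac{1}{D}$ times an eigenvalue of $L(G_i)$, and that it is exactly the unit-trace property of each $\rho(G_i)$ that makes the mixing term $\sum_i c_i\log_2\frac{1}{c_i}$ appear cleanly without residual cross terms.
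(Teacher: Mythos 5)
Your proof is correct and follows essentially the same route as the paper's: identify the spectrum of the disjoint union as the multiset $\{c_i\lambda_{i,\ell}\}$, split the logarithm, and use $\sum_{\ell}\lambda_{i,\ell}=\tr\rho(G_i)=1$ to separate the mixing entropy from the weighted entropies. Your additional remarks on the block-diagonal structure and the implicit assumption that each $G_i$ has an edge are sound but do not change the argument.
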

\begin{proof}
Let $\spec(\rho(G_i))=\{\lambda_j(i) : j \in [n_i]\}$ for $i \in [k]$. Then 
\[\spec\left(\dot{\bigcup}_{i=1}^k G_i\right) = \bigcup_{i=1}^k \left\{c_i \lambda_j (i): j \in [n_i]\right\}.\]
Therefore, the von Neumann entropy is 
\bea
S\left(\dot{\bigcup}_{i=1}^k G_i\right) &=& \sum_{i=1}^k \sum_{j=1}^{n_i} c_i \lambda_j(i) \log_2\frac{1}{c_i\lambda_j(i)}\\
 &=& \sum_{i=1}^k \left(c_i \left(\sum_{j=1}^{n_i} \lambda_j(i) \log_2\frac{1}{\lambda_j(i)}\right) + c_i \log_2\frac{1}{c_i}\left(\sum_{j=1}^{n_i} \lambda_j(i) \right)\right)\\
 &=& \sum_{i=1}^k c_i S(G_i) + \sum_{i=1}^k c_i \log_2\frac{1}{c_i}.\qedhere
\eea
\end{proof}

One way to think of the expression for $S(\dot{\cup}G_i)$ given in the previous proposition is the following: the first summation is a convex combination of the von Neumann entropies of the $G_i$ with coefficients $c_i$, and the second summation is the Shannon entropy of the probability distribution $(c_1, \ldots, c_k)$.

\begin{thm}\label{addedge}
{\rm \cite{PS08}} If $G$ and $H$ are two graphs on the same vertex set and $E(G)\cap E(H)=\emptyset$, then 
\[S(G\cup H)\geq \frac{d_G}{d_G+d_H}S(G)+\frac{d_H}{d_G+d_H}S(H).\]
In particular, if  $G$ is a graph and $e\in E(\overline{G})$, then 
\[S(G+e)\geq \frac{d_G}{d_G+2}S(G).\]
\end{thm}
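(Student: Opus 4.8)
The plan is to reduce the first inequality to the concavity of the von Neumann entropy viewed as a function of a density matrix. The starting observation is that because $E(G)\cap E(H)=\emptyset$, the adjacency matrices and the degree matrices simply add, so $L(G\cup H)=L(G)+L(H)$; taking traces gives $d_{G\cup H}=d_G+d_H$. Setting $t=\frac{d_G}{d_G+d_H}$, this yields the key identity
\[\rho(G\cup H)=\frac{1}{d_G+d_H}\bigl(L(G)+L(H)\bigr)=t\,\rho(G)+(1-t)\,\rho(H),\]
which exhibits $\rho(G\cup H)$ as a convex combination of the scaled Laplacians of $G$ and $H$ with weights $\frac{d_G}{d_G+d_H}$ and $\frac{d_H}{d_G+d_H}$.

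Next I would use the standard identity $S(M)=-\tr(M\log_2 M)$, which agrees with the eigenvalue definition of von Neumann entropy, together with the fact that $M\mapsto-\tr(M\log_2 M)$ is concave on density matrices. Applying concavity to the convex combination above gives
\[S(G\cup H)=S\bigl(t\,\rho(G)+(1-t)\,\rho(H)\bigr)\ge t\,S(\rho(G))+(1-t)\,S(\rho(H)),\]
which is exactly the claimed inequality. If a self-contained argument is wanted, concavity can be derived from Klein's inequality, i.e.\ the nonnegativity of the relative entropy $S(M\|N)=\tr\bigl(M(\log_2 M-\log_2 N)\bigr)\ge0$: writing $\rho=t\,\rho(G)+(1-t)\,\rho(H)$ and expanding $-\tr(\rho\log_2\rho)$ linearly in the two summands shows that $S(\rho)-t\,S(\rho(G))-(1-t)\,S(\rho(H))=t\,S(\rho(G)\|\rho)+(1-t)\,S(\rho(H)\|\rho)\ge0$.

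For the particular case, I would take $H$ to be the graph on the same vertex set whose single edge is $e$, so that $G\cup H=G+e$ and $d_H=2$, whence $\frac{d_G}{d_G+d_H}=\frac{d_G}{d_G+2}$. Since $H$ is a lone edge together with isolated vertices, $L(H)$ has eigenvalues $2$ and $0^{(n-1)}$, so $\rho(H)$ has spectrum $\{1,0^{(n-1)}\}$ and therefore $S(H)=0$. Substituting this into the general inequality (or simply discarding the nonnegative term $\frac{d_H}{d_G+d_H}S(H)$) gives $S(G+e)\ge\frac{d_G}{d_G+2}S(G)$.

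The only step that is not mere bookkeeping is the concavity of the von Neumann entropy, so I expect that to be the crux of the argument; the additivity $L(G\cup H)=L(G)+L(H)$ and the evaluation $S(H)=0$ for a single edge are routine.
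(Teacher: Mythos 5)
Your proposal is correct and takes essentially the same approach as the proof this paper relies on: Theorem \ref{addedge} is quoted from \cite{PS08}, and the paper's discussion in Section \ref{scompare} attributes the argument exactly to concavity of von Neumann entropy (\cite[Proposition 3.1]{OP93}) applied to the decomposition $\rho(G\cup H)=\frac{d_G}{d_G+d_H}\rho(G)+\frac{d_H}{d_G+d_H}\rho(H)$, which is your key identity. As a bonus, your Klein-inequality derivation also shows the inequality is strict unless $\rho(G)=\rho(H)$, which is precisely the refinement the paper invokes later.
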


The question of whether the factor $\frac{d_G}{d_G+2}$ can be removed from the second statement was raised in \cite{PS08}.

\begin{qstn}\label{addedgeQ}{\rm \cite{PS08}} Is the von Neumann entropy 
monotonically non-decreasing 
under edge addition?
\end{qstn}

We show in  Proposition \ref{addedgelowerent}  
 that adding an edge can decrease the von Neumann entropy slightly, answering Question \ref{addedgeQ} negatively.


\section{Using R\'enyi 2-entropy as a lower bound for von Neumann entropy}

In this section we give a lower bound for the von Neumann entropy in terms of the degree sequences of graphs by using the R\'enyi 2-entropy, using the fact that for all graphs $G$,
\beq\label{SgeH2}
S(G)\ge H_2(G).
\eeq

\begin{rem}\label{Rentropyform}
The R\'enyi 2-entropy of a trace one positive semidefinite  matrix $M$ can be expressed in the following useful manner:
\[H_2(M) = -\log_2\left(\sum_{i=1}^n\lam_i^2\right)= -\log_2 \tr(M^2) = -\log_2 \text{sum}(M \circ M),\]
where $\circ$ denotes the entrywise product (also called the Hadamard or Schur product) and $\text{sum}(M)$ is the sum of the entries of $M$. For a graph $G$ with vertex degrees $d_i$ for $i \in [n]$ and degree sum $d_G$, the R\'enyi 2-entropy of its scaled Laplacian $\rho(G)$ is 
\beq\label{R2eq} H_2(G)=-\log_2\left(\frac{d_G + \sum_i d_i^2}{d_G^2}\right)=\log_2\left(\frac{d_G^2}{d_G + \sum_i d_i^2}\right).\eeq
\end{rem}


\begin{thm}[R\'enyi-Quantum Star Test]
\label{quadracor}
Let $G$ be a graph on $n$ vertices satisfying 
\beq\label{eq:renshanstar}\frac{d_G^2}{\sum_{i=1}^n d_i^2+d_G}\geq \frac{2n-2}{n^{\frac{n}{2n-2}}}.\eeq
Then $S(G)\geq H_2(G)\geq S(K_{1,n})$.
\end{thm}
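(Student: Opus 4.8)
The plan is to chain together the two ingredients already in hand and then recognize that the right-hand side of the hypothesis is engineered to be precisely the von Neumann entropy of the star. The inequality $S(G)\ge H_2(G)$ is exactly \eqref{SgeH2}, valid for every graph, so it suffices to bound $H_2(G)$ from below by the star's entropy; the left inequality in the conclusion then comes for free.

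For the remaining bound I would invoke the closed form from Remark \ref{Rentropyform}, namely $H_2(G)=\log_2\!\left(\frac{d_G^2}{\sum_i d_i^2 + d_G}\right)$ of \eqref{R2eq}. Since $\log_2$ is monotonically increasing, applying it to both sides of the hypothesis \eqref{eq:renshanstar} gives
\[ H_2(G)=\log_2\!\left(\frac{d_G^2}{\sum_{i=1}^n d_i^2 + d_G}\right)\ge \log_2\!\left(\frac{2n-2}{n^{n/(2n-2)}}\right). \]
Expanding the logarithm on the right via $\log_2(x/y)=\log_2 x-\log_2 y$ and $\log_2\!\big(n^{n/(2n-2)}\big)=\frac{n}{2n-2}\log_2 n$ yields exactly
\[ \log_2(2n-2)-\frac{n}{2n-2}\log_2 n, \]
which by Proposition \ref{starprop} equals $S(K_{1,n-1})$. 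Stringing the three displays together produces $S(G)\ge H_2(G)\ge S(K_{1,n-1})$, as desired.

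In truth there is no substantive obstacle: the entire argument is monotonicity of the logarithm followed by a one-line algebraic identity. The only genuine content is the observation---presumably the motivation behind the statement---that the ungainly threshold $\frac{2n-2}{n^{n/(2n-2)}}$ has been reverse-engineered so that its base-2 logarithm matches the star-entropy formula of Proposition \ref{starprop} exactly; once this is noticed the inequality is forced, and the theorem simply repackages \eqref{SgeH2} as a directly checkable numerical test on the degree sequence. I would also flag that the conclusion reads most naturally as $S(K_{1,n-1})$, the star on $n$ vertices, rather than $S(K_{1,n})$, since it is the order-$n$ star whose entropy appears on the right-hand side above.
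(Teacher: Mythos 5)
Your proof is correct and essentially identical to the paper's own argument, which likewise chains \eqref{SgeH2}, the closed form \eqref{R2eq}, the monotonicity of $\log_2$, and the identity $S(K_{1,n-1})=\log_2\bigl(\tfrac{2n-2}{n^{n/(2n-2)}}\bigr)$ from Proposition \ref{starprop}. Your closing flag about notation is also accurate: the statement writes $S(K_{1,n})$, but the paper's proof and its later application in Theorem \ref{densitythm} make clear that the intended graph is the order-$n$ star $K_{1,n-1}$.
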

\begin{proof}
Recall that by Proposition \ref{starprop},\vspace{-3pt}
\[S(K_{1,n})=\log_2(2n-2)-\frac{n}{2n-2}\log_2 n=\log_2\left(\frac{2n-2}{n^{\frac{n}{2n-2}}}\right).\vspace{-3pt}\]
The result then follows immediately from \eqref{SgeH2}, \eqref{R2eq}, and the fact that $\log_2x$ is increasing. \end{proof}


As shown in Table \ref{quadratbl}, most graphs of small orders pass the R\'enyi-Quantum Star Test;  the graphs that fail the R\'enyi-Quantum Star Test  are shown in  \cite{sagedata1} for $n\le 8$.
 
\begin{table}[h!]
\[\begin{array}{c|ccc}
n & \text{\# } H_2(G)<S(K_{1,n}) & \text{\# connected graphs} & \text{percentage} \\
\hline 
2 & 0 & 1 & 0.00 \\
3 & 1 & 2 & 0.50 \\
4 & 2 & 6 & 0.33 \\
5 & 4 & 21 & 0.19 \\
6 & 8 & 112 & 0.071 \\
7 & 16 & 853 & 0.019 \\
8 & 49 & 11117 & 0.0044 \\
9 & 106 & 261080 & 0.00041 \\
10 & 307 & 11716571 & 0.000026 \\
\end{array}\]
\caption{Number of graphs with $H_2(G)<S(K_{1,n})$}
\label{quadratbl}
\end{table}

All the graphs that fail the R\'enyi-Quantum Star Test are quite sparse, which led us to the following result.


\begin{thm}\label{densitythm}
Let $G$ be a graph on $n$ vertices and $m$ edges with 
\beq\label{densityeq} \frac{1}{\sqrt{n} -1} \le \frac m {{n \choose 2}},\eeq
i.e., having density at least $\frac{1}{\sqrt{n} -1}$. Then $S(G)\geq H_2(G)\geq S(K_{1,n})$.
As $n\to\infty$, almost all graphs satisfy \eqref{densityeq}.
\end{thm}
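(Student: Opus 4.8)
The plan is to reduce everything to the R\'enyi-Quantum Star Test (Theorem~\ref{quadracor}): it suffices to show that the density hypothesis \eqref{densityeq} forces the quantity $\frac{d_G^2}{\sum_i d_i^2 + d_G}$ appearing in \eqref{eq:renshanstar} to be at least $\frac{2n-2}{n^{n/(2n-2)}}$, after which Theorem~\ref{quadracor} delivers $S(G)\ge H_2(G)\ge S(K_{1,n})$ with no further work. Writing $d_G=2m$, the first task is to bound $\sum_i d_i^2$ from above in terms of $n$ and $m$ alone. The naive estimate $\sum_i d_i^2\le (n-1)d_G$ coming from $d_i\le n-1$ is tempting, but a quick check at the threshold density shows it is too lossy (by roughly a factor of $2$ for large $n$) and does not clear the target. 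Instead I would invoke de Caen's inequality $\sum_i d_i^2\le m\!\left(\frac{2m}{n-1}+n-2\right)$, which is exactly sharp enough.

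Substituting de Caen's bound into the left-hand side of \eqref{eq:renshanstar} and simplifying, one finds that it is bounded below by $f(m):=\frac{4m(n-1)}{2m+n(n-1)}$, an increasing function of $m$. The hypothesis \eqref{densityeq} is precisely $m\ge m_0$ with $m_0=\frac{\binom n2}{\sqrt n-1}=\frac{n(\sqrt n+1)}{2}$, so monotonicity gives $f(m)\ge f(m_0)$, and a short computation collapses this neatly to $f(m_0)=\frac{2n-2}{\sqrt n}$. Finally, since $\frac{n}{2n-2}>\frac12$ we have $n^{n/(2n-2)}>\sqrt n$, hence $\frac{2n-2}{\sqrt n}\ge \frac{2n-2}{n^{n/(2n-2)}}$, which is exactly the right-hand side of \eqref{eq:renshanstar}. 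Thus the hypothesis of Theorem~\ref{quadracor} holds and the entropy chain follows.

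For the second statement I would count over the uniform model on labeled graphs of order $n$ (equivalently $G(n,\tfrac12)$). Condition \eqref{densityeq} can fail only for graphs with at most $k:=\frac{\binom n2}{\sqrt n-1}$ edges, and the number of such graphs is $\sum_{j=0}^{k}\binom{N}{j}$ with $N=\binom n2$. Since $k/N=\frac{1}{\sqrt n-1}\to 0$, the standard bound $\sum_{j\le k}\binom Nj\le 2^{N\,h(k/N)}$, where $h(x)=-x\log_2 x-(1-x)\log_2(1-x)$ is the binary entropy, yields a count of $2^{o(N)}$. This is a vanishing fraction of the $2^{N}$ graphs on $n$ vertices, so the proportion of graphs violating \eqref{densityeq} tends to $0$, i.e.\ almost all graphs satisfy the hypothesis.

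The main obstacle is entirely in the first step: the inequality is genuinely tight, in that the obvious degree-square bound just barely fails, so the argument really needs the sharper de Caen estimate. The reason it then works is the slightly surprising exact cancellation that turns $f(m_0)$ into $\frac{2n-2}{\sqrt n}$, which clears the target by precisely the factor $n^{1/(2n-2)}$; verifying that this cancellation is not a coincidence but survives the algebra is the step I would be most careful about.
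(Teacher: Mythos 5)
Your proof is correct, and its core runs along the same lines as the paper's: both arguments hinge on de Caen's inequality $\sum_i d_i^2 \le m\bigl(\frac{2m}{n-1}+n-2\bigr)$, both reduce the problem to verifying the hypothesis \eqref{eq:renshanstar} of Theorem \ref{quadracor}, and both clear the target via the same final observation that $n^{n/(2n-2)}\ge \sqrt n$. Where you differ is in organization and in the ``almost all'' step. For the main inequality, the paper runs a direct chain of inequalities starting from $\binom{n}{2}\frac{1}{\sqrt n-1}\le m$ and multiplying through, whereas you isolate the lower bound $f(m)=\frac{4m(n-1)}{2m+n(n-1)}$, observe it is increasing in $m$, and evaluate exactly at the threshold $m_0=\frac{n(\sqrt n+1)}{2}$; the ``surprising cancellation'' you flag, $f(m_0)=\frac{2n-2}{\sqrt n}$, is just the factorization $n-1=(\sqrt n-1)(\sqrt n+1)$, and it is the same cancellation the paper exploits implicitly in its inequality chain --- your version arguably makes the sharpness of the threshold more transparent. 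The genuine divergence is the second claim: the paper cites a concentration result \cite[Theorem 3.2]{HHMS10} (a random labeled graph has at least $\frac12\binom n2 - n\sqrt{2\ln n}$ edges with probability at least $1-n^{-2}$), while you give a self-contained counting argument, bounding the number of graphs with at most $k=\binom{n}{2}/(\sqrt n-1)$ edges by $2^{N h(k/N)}$ with $N=\binom n2$ and $h$ the binary entropy. Your argument is valid (the bound requires $k\le N/2$, which holds once $n\ge 9$, harmless for an asymptotic statement) and has the advantage of being elementary and citation-free; the paper's route buys an explicit quantitative rate ($1-n^{-2}$) rather than just $o(1)$.
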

\begin{proof}
Theorem 1 from \cite{Caen} gives the bound that $\sum_{i = 1}^n d_i^2 \le m\left(\frac{2m}{n-1} + n -2\right).$  \vspace{-3pt}

\bea
{n \choose 2}\left( \frac{1}{\sqrt{n} -1}\right) &\le& m \\
2n(n-1) &\le& 4m(n^{1/2} -1) \\
2(n-1)m \left( \frac{2m}{n-1} + n \right) &\le& 4m^2n^{1/2} \\
2(n-1)\left(m \left( \frac{2m}{n-1} + n-2\right)+2m \right) &\le& 4m^2n^{1/2} \\
2(n-1) \left(\sum_{i=1}^n d_i^2 + d_G\right) &\le& d_G^2 n^{1/2}. 
\eea
 
Since $\frac{n}{2n -2}\ge \frac 1 2$,   \[\frac{2(n-1)}{n^{\frac{n}{2n -2}}} \le \frac{2(n-1)}{n^{1/2}}\le \frac{d_G^2}{\sum_{i=1}^n d_i^2 + d_G}.\] Thus we have satisfied the condition for Theorem \ref{quadracor} and thus  $H_2(G)\ge S(K_{1,n-1})$.

By \cite[Theorem 3.2]{HHMS10}, if one chooses a graph $G$ at random from all labeled graphs on $n$ vertices, then $|E(G)|\geq \frac 1 2\binom{n}{2}-n\sqrt{2\ln n}$ with probability at least $1-n^{-2}$, justifying the last statement.  \end{proof}

Sufficient density implies a graph satisfies the R\'enyi-Quantum Star Test, but the converse is false. As an example, consider the path $G=P_n$, for which  $d_G=2n-2$ and $\sum_{i=1}^nd_i^2=4(n-2)+2=4n-6$.  Thus the left hand side of  \eqref{eq:renshanstar} is 
\[\frac{(2n-2)^2}{4n-6+(2n-2)}\sim \Theta(n);\]
while the right hand side is $\Theta(n^\frac{1}{2})$.  So for large enough $n$, the R\'enyi-Quantum Star Test shows $S(P_n)\geq H_2(P_n)\ge S(K_{1,n})$. In fact, for $n\ge 6$, $H_2(P_n)\ge S(K_{1,n-1})$.  This observation and Theorem \ref{quadracor} provide evidence for Conjecture \ref{Neumannmin}.


One could naturally ask whether the inequality $S(G)\ge H_\alpha(G)$ is tight ($\alpha>1$), and for what graphs.  
For a given probability distribution $p = (p_1, \ldots, p_n)$, the R\'enyi $\alpha$-entropy can be written as 
\bea
H_\alpha(p) 
 & = &-\frac{1}{\alpha-1}\log_2(p_1\cdot p_1^{\alpha-1}+p_2\cdot p_2^{\alpha-1}+\cdots +p_n\cdot p_n^{\alpha-1})\\
 & \leq& -\frac{1}{\alpha-1}\left[p_1\log_2(p_1^{\alpha-1})+p_2\log_2(p_2^{\alpha-1})+\cdots +p_n\log_2(p_n^{\alpha-1})  \right] \\
 & =& S(p).
\eea
It follows from the strict convexity of $-\log_2$ that $H_\alpha(p)$ is strictly less than $\sum_{i} -p_i\log_2(p_i)$ if and only if the nonzero $p_i$ are not all the same. Of course the latter quantity is just the Shannon entropy.   Hence $S(K_n)=H_\alpha(K_n)$, and this is the only connected graph on $n$ vertices that has $S(G)=H_\alpha(G)$ for $\alpha>1$.



\section{R\'enyi entropy}

For a fixed $\alpha$, it is natural to ask which graph(s)   maximize $H_\alpha(G)$ among graphs on $n$ vertices, and which graph(s)   minimize $H_\alpha(G)$, among graphs on $n$ vertices and among connected graphs on $n$ vertices.

\begin{prop}\label{Hminmax}
Fix  $\alpha>1$ and an integer  $n\ge 1$.  Over all probability distributions $p=(p_1,\dots,p_n)$: 
\ben
\item The  distribution $p_0=(1,0,\dots,0)$ minimizes $H_\alpha(p)$ and this is the only probability distribution (up to permutation of the entries) that does so.
\item The constant distribution $p_c=(\frac 1 n,\dots, \frac 1 n)$ maximizes $H_\alpha(p)$.
\een 
\end{prop}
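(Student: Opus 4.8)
The plan is to reduce both parts to a single optimization of the quantity $f(p)=\sum_{i=1}^n p_i^\alpha$ over the probability simplex, exploiting the fact that the prefactor $\frac{1}{1-\alpha}$ is negative when $\alpha>1$. Concretely, write $H_\alpha(p)=\frac{1}{1-\alpha}\log_2 f(p)$; since $\log_2$ is strictly increasing and $\frac{1}{1-\alpha}<0$, the map $f\mapsto H_\alpha$ is strictly decreasing. Hence $H_\alpha(p)$ is minimized exactly where $f(p)$ is maximized, and maximized exactly where $f(p)$ is minimized. This monotone correspondence is strict, so any uniqueness statement for the extremizers of $f$ transfers verbatim to $H_\alpha$. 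The one thing to watch here is the sign flip, which interchanges ``min'' and ``max''; getting this backwards is the easiest way to break the argument.

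For part (1) I would maximize $f$ by the elementary pointwise bound that $x^\alpha\le x$ for $x\in[0,1]$ and $\alpha>1$, with equality if and only if $x\in\{0,1\}$. Summing over $i$ gives $f(p)=\sum_i p_i^\alpha\le\sum_i p_i=1$, so $f(p)\le 1$ with equality precisely when every $p_i\in\{0,1\}$. Because the $p_i$ sum to $1$, the only such distributions are the permutations of $(1,0,\dots,0)$, which establishes both that $p_0$ maximizes $f$ (hence minimizes $H_\alpha$) and that it is the unique minimizer up to permutation.

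For part (2) I would minimize $f$ using the strict convexity of $x\mapsto x^\alpha$ on $[0,1]$ for $\alpha>1$. Jensen's inequality gives $\frac1n\sum_i p_i^\alpha\ge\big(\frac1n\sum_i p_i\big)^\alpha=n^{-\alpha}$, i.e.\ $f(p)\ge n^{1-\alpha}$, with equality at $p_c$. Equivalently one may invoke the power-mean inequality. This shows $p_c$ minimizes $f$ and therefore maximizes $H_\alpha$; strictness of Jensen further yields that $p_c$ is in fact the unique maximizer, although the statement only requires that it be a maximizer.

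The argument is short, and I do not anticipate a serious obstacle beyond bookkeeping. The two potential pitfalls are the direction reversal caused by $\frac{1}{1-\alpha}<0$, already flagged above, and the boundary of the simplex, where some $p_i=0$. The boundary causes no trouble here because both key inequalities ($x^\alpha\le x$ and Jensen) hold on the closed interval $[0,1]$, so I can avoid Lagrange-multiplier arguments that would only locate interior critical points and would require separate handling of the faces of the simplex.
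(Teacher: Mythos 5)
Your proposal is correct and follows essentially the same route as the paper: both reduce to extremizing $\sum_i p_i^\alpha$ (noting the sign flip from $\frac{1}{1-\alpha}<0$) and both use Jensen's inequality with the convexity of $x^\alpha$ for the maximum. The only difference is that you spell out the pointwise bound $x^\alpha\le x$ on $[0,1]$ with its equality cases, which is the detail hiding behind the paper's ``it is clear that $0\le H_\alpha(p)$'' in part (1).
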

\begin{proof}
It is clear that $0\leq H_\alpha(p)$ for all probability distributions $p$, and the only probability distribution that achieves $\alpha$-entropy zero is $p_0$. 

Now consider $p=(p_1,\dots,p_n)$. For all $\alpha>1$, $x^\alpha$ is a convex function, so by Jensen's inequality, 
\[\left( \frac 1{n}\right)^\alpha=\left(\sum_{i=1}^{n} \frac {p_i}{n}\right)^\alpha\le \sum_{i=1}^{n} \frac 1 {n} {p_i}^\alpha=\frac 1 {n}\sum_{i=1}^{n}  {p_i}^\alpha.\]
Thus $\sum_{i=1}^np^\alpha$ attains its minimum when $p_1=\cdots =p_{n}=\frac{1}{n}$, and  so $-\log_2\left(\sum_{i=1}^np_i^\alpha\right)$ attains its maximum there. 
\end{proof}

\begin{cor}\label{cor:Rmax}
Let $\alpha>1$.  For all (possibly disconnected) graphs $G$ on $n$ vertices, 
\[0=H_\alpha(K_2\du \overline{K_{n-2}})
\leq H_\alpha(G)\leq H_\alpha(K_n)=\log_2(n-1).\]
Furthermore, $K_2\du \overline{K_{n-2}}$ is the only graph that minimizes  R\'enyi $\alpha$-entropy for $\alpha >1$. \end{cor}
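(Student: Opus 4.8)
The plan is to reduce everything to Proposition~\ref{Hminmax}, applied to the probability distribution $\spec(\rho(G))=\{\lambda_1,\dots,\lambda_n\}$; recall that by definition $H_\alpha(G)=H_\alpha(\rho(G))$ is exactly the R\'enyi $\alpha$-entropy of this distribution. Throughout I assume $G$ has at least one edge, so that $\rho(G)$ is defined.

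For the lower bound together with uniqueness of the minimizer, I would invoke part~(1) of Proposition~\ref{Hminmax}: it gives $H_\alpha(G)\ge 0$ for every $\alpha>1$, and, crucially, it pins down the equality case. Thus $H_\alpha(G)=0$ holds precisely when $\spec(\rho(G))$ is a permutation of $(1,0,\dots,0)$, i.e.\ one eigenvalue equals $1$ and the rest vanish. As recorded in the discussion preceding Proposition~\ref{Smaxmin}, such a spectrum is realized only by $K_2\dunion\overline{K_{n-2}}$ (indeed $\spec(L(K_2\dunion\overline{K_{n-2}}))=\{2,0,\dots,0\}$, so after dividing by $\tr L=2$ one gets $\spec(\rho)=(1,0,\dots,0)$). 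This settles both the lower bound and the claim that $K_2\dunion\overline{K_{n-2}}$ is the unique minimizer.

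For the upper bound the main obstacle appears: a direct application of Proposition~\ref{Hminmax}(2) only gives $H_\alpha(G)\le\log_2 n$, attained by the uniform distribution $(\tfrac1n,\dots,\tfrac1n)$, which overshoots the claimed $\log_2(n-1)$. The key observation I would use is that this uniform distribution is never the spectrum of a scaled Laplacian: since $L(G)\mathbf{1}=0$, the value $0$ always belongs to $\spec(\rho(G))$, so the eigenvalue distribution is supported on at most $n-1$ points. Because $0^\alpha=0$, deleting zero eigenvalues leaves $\sum_i\lambda_i^\alpha$ and hence $H_\alpha$ unchanged, so $H_\alpha(G)$ equals the R\'enyi $\alpha$-entropy of the induced distribution on the at most $n-1$ nonzero eigenvalues. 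Applying Proposition~\ref{Hminmax}(2) with $n$ replaced by $n-1$ then yields $H_\alpha(G)\le\log_2(n-1)$. A short computation gives $\spec(\rho(K_n))=\{(\tfrac{1}{n-1})^{(n-1)},0\}$, the uniform distribution on $n-1$ points, so $H_\alpha(K_n)=\log_2(n-1)$ and the bound is tight.

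In short, the only nontrivial point is recognizing that the forced zero eigenvalue from $L(G)\mathbf{1}=0$ collapses the problem from $n$-point to $(n-1)$-point distributions; this is precisely what moves the maximizer from an unattainable uniform distribution on all $n$ eigenvalues to $K_n$.
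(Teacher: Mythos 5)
Your proposal is correct and follows essentially the same route as the paper: the minimum and its uniqueness come from Proposition~\ref{Hminmax}(1) together with the fact that only $K_2\dunion\overline{K_{n-2}}$ realizes the spectrum $\{1,0^{(n-1)}\}$, and the maximum comes from the forced zero Laplacian eigenvalue reducing the problem to $(n-1)$-point distributions, where Proposition~\ref{Hminmax}(2) identifies the uniform spectrum of $\rho(K_n)$ as the maximizer. Your write-up merely makes explicit a step the paper leaves implicit, namely that discarding zero eigenvalues does not change $H_\alpha$ because $0^\alpha=0$.
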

\begin{proof}
  It is known that 
$\spec(\rho(K_2\dot{\cup}\overline{K_{n-2}}))=\left\{1,0^{(n-1)}\right\}$ and this is the only graph on $n$ vertices that realizes this spectrum.  
  Therefore, 
  $K_2\dot{\cup} \overline{K_{n-2}}$ is the only graph on $n$ vertices with minimum R\'enyi $\alpha$-entropy.

Observe that $\spec(\rho(K_n))=\left\{\frac{1}{n-1}^{(n-1)},0\right\}$, so $H_\alpha(K_n)=\log_2(n-1)$. Since any graph $G$ has at least one Laplacian eigenvalue equal to zero,    $H_\alpha(G)\le H_\alpha(K_n)$ by Proposition \ref{Hminmax}.  
\end{proof}

For the minimum over connected graphs, we make the following conjecture.

\begin{conjecture}\label{Renyimin}
Let $\alpha>1$.  For any connected graph $G$ on $n$ vertices, 
\[H_\alpha(K_{1,n-1})\leq H_\alpha(G).\]
\end{conjecture}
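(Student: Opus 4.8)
The plan is to recast the conjecture as an extremal problem for spectral power sums. For $\alpha>1$ the coefficient $\frac{1}{1-\alpha}$ is negative and $\log_2$ is increasing, so $H_\alpha(G)=\frac{1}{1-\alpha}\log_2\tr(\rho(G)^\alpha)$ is minimized exactly when $\Phi_\alpha(G):=\tr(\rho(G)^\alpha)=\sum_{i=1}^n\lambda_i^\alpha$ is maximized. Using the known star spectrum $\spec(\rho(K_{1,n-1}))=\left\{\frac{n}{2n-2},\ \left(\frac{1}{2n-2}\right)^{(n-2)},\ 0\right\}$, the target value is $\Phi_\alpha(K_{1,n-1})=\left(\frac{n}{2n-2}\right)^\alpha+(n-2)\left(\frac{1}{2n-2}\right)^\alpha$, and Conjecture \ref{Renyimin} becomes the claim that $\Phi_\alpha(G)\le\Phi_\alpha(K_{1,n-1})$ for every connected $G$, with the star as the unique maximizer.

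First I would settle the limiting (min-entropy) case $\alpha\to\infty$, which gives both confidence and the correct extremal characterization. Since $\Phi_\alpha(G)^{1/\alpha}\to\lambda_1(\rho(G))=\mu_1(G)/(2m)$, where $\mu_1$ is the largest Laplacian eigenvalue, the limiting statement is $\mu_1(G)/(2m)\le n/(2n-2)$. This follows at once from the classical bound $\mu_1(G)\le n$ together with $m\ge n-1$ for connected graphs, and equality forces $\mu_1=n$ and $m=n-1$, i.e. a tree with a dominating vertex, which is exactly $K_{1,n-1}$. Thus the conjecture holds in the limit $\alpha\to\infty$, with $K_{1,n-1}$ the unique extremizer there.

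For finite $\alpha>1$ I would attempt a two-stage reduction. Stage one: reduce to trees. Heuristically the normalization $(2m)^{-\alpha}$ in $\Phi_\alpha$ penalizes extra edges, so the maximizer should be a tree; the goal is to turn this into a proof that no connected graph beats the best tree. Stage two: among trees the denominator $2m=2(n-1)$ is constant, so maximizing $\Phi_\alpha$ is the same as maximizing the Laplacian spectral moment $\tr(L^\alpha)=\sum_i\mu_i^\alpha$. For integer $\alpha=k$ this is a counting quantity (a signed sum over closed walks governed by $L$) for which the star should be extremal, generalizing the computation behind the $\alpha=2$ result (Theorem \ref{2Renstarmin}); I would then pass from integer to all real $\alpha>1$ by an analytic argument, writing $x^\alpha$ through a Bernstein/L\'evy--Khintchine-type representation so that $\tr(L^\alpha)$ becomes an integral over $s$ of heat-semigroup traces $\tr(e^{-sL})$, which respect the spectral comparisons already controlled for integer powers.

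The main obstacle is that the comparison cannot be obtained uniformly in $\alpha$ by majorization: the star's Laplacian spectrum $\{n,1^{(n-2)},0\}$ does not majorize that of $P_n$ (the partial sums already cross at $k=2$, since $4+1=5<5.41\approx 3.41+2$ when $n=4$), so although $\sum_i p_i^\alpha$ is Schur-convex one cannot invoke it to order the power sums, and each $\alpha$ requires a power-specific inequality. Compounding this, Proposition \ref{addedgelowerent} shows that the $\alpha\to1$ boundary functional can actually decrease under edge addition, so the reduction to trees cannot be justified by any monotonicity principle and is itself the crux: one must show that the edge-count penalty in the normalization always dominates the spectral concentration gained by adding edges. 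I expect the rigorous tree reduction, valid simultaneously for all $\alpha>1$, to be where the real difficulty lies.
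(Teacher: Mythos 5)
First, be aware that the statement you were asked to prove is left as a \emph{conjecture} in the paper: the authors prove only the case $\alpha=2$ (Theorem \ref{2Renstarmin}) and report numerical verification for a few other values of $\alpha$ on at most $8$ vertices, so there is no proof of the full statement to compare yours against. Your proposal does not close that gap either: it is a research program, not a proof. The parts you actually establish are correct but strictly weaker than the conjecture. The $\alpha\to\infty$ (min-entropy) case is a valid argument: $\lambda_{\max}(\rho(G))=\mu_1(G)/(2m)\le n/(2n-2)$ follows from $\mu_1(G)\le n$ and $m\ge n-1$, and equality indeed forces $G=K_{1,n-1}$; but since the limiting inequality is implied by, and does not imply, the inequality at any finite $\alpha$, this settles the conjecture for no $\alpha>1$. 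Your observation that spectral majorization fails (for $n=4$ the star's Laplacian spectrum $\{4,1,1,0\}$ does not majorize the path's $\{2+\sqrt{2},2,2-\sqrt{2},0\}$) is also correct and is a genuine obstruction worth recording; note, however, that the paper's $\alpha=2$ proof sidesteps it by majorizing \emph{degree} sequences rather than spectra (Proposition \ref{treeR2star}), which is available only because $\tr(\rho(G)^2)$ has the closed form $(\sum_i d_i^2+d_G)/d_G^2$ of Remark \ref{Rentropyform} --- a formula with no analogue for general $\alpha$.

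The two stages that would constitute the actual proof are both left as hopes. Stage one (reduction to trees) is supported only by the heuristic that the normalization by $(2m)^{\alpha}$ penalizes edges; the paper's Lemma \ref{addedgelem} shows what a rigorous version looks like for $\alpha=2$ (if $\tr_2(G)\le\tr_2(K_{1,n-1})$ then $\tr_2(G+e)<\tr_2(K_{1,n-1})$), and its proof leans on the degree formula and Lemma \ref{avelem}, so it does not transfer as stated to other $\alpha$. Stage two (the star maximizes $\tr(L^\alpha)$ among trees for every real $\alpha>1$) is asserted for integer $\alpha$ via walk counting without proof, and the proposed Bernstein-type integral representation for interpolating to real $\alpha$ would require a pointwise heat-trace comparison such as $\tr(e^{-sL(K_{1,n-1})})\ge\tr(e^{-sL(T)})$ for all $s>0$, which is itself an unproven and stronger statement --- and which in any case addresses only trees, not the stage-one reduction you correctly identify as the crux. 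In short: what you prove is true, what remains unproven is exactly the conjecture, and identifying where the difficulty lies is not the same as resolving it.
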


The conjecture has been checked for $\alpha=1.1,1.5, 5,10$ for up to $8$ vertices by \textit{Sage} using code in \cite{sagedata1}, and is proved for $\alpha=2$ in Theorem \ref{2Renstarmin} below.  Notice that since $\lim_{\alpha\rightarrow 1^+}H_\alpha(G)=S(G)$, Conjecture \ref{Renyimin} implies Conjecture \ref{Neumannmin}.

\ms
The relationship between $H_2(G)$ and $H_2(H)$ can be described in terms of the degrees of the vertices of $G$ and $H$.  Let $d_1,\ldots ,d_n$ be the degree sequence of $G$. 
 Define 
\[\tr_2(G):=\frac{\sum_{i=1}^nd_i^2+d_G}{d_G^2}=\tr(\rho(G)^2).\]
From \eqref{R2eq}, $H_2(G)=-\log_2\left(\tr_2(G)\right)$, so $\tr_2(G)\geq \tr_2(H)$ if and only if $H_2(G)\leq H_2(H)$.  Therefore, Conjecture \ref{Renyimin} for $\alpha=2$ is equivalent to saying $\tr_2(G)\leq\tr_2(K_{1,n-1})$ for all connected graphs $G$ on $n$ vertices. 

The base case of the proof involves trees, and is proved by using the notion of majorization.  Let $\gamma=\{c_i\}_{i=1}^n$ and $\beta=\{b_i\}_{i=1}^n$ be two sequences of nonnegative integers with $\sum_{i=1}^nc_i=\sum_{i=1}^nb_i$.  Assuming the numbers are labeled such that 
$c_1\geq c_2\geq \cdots \geq c_n\text{ and }b_1\geq b_2\geq \cdots \geq b_n,$
we say that $\gamma$ \textit{majorizes} $\beta$ if  for all $k$ \vspace{-4pt}
\[\sum_{i=1}^kc_i\geq \sum_{i=1}^kb_i, \vspace{-4pt}\]
where the majorization is said to be strict if one of the inequalities is strict.  The next proposition is well known (and  easy to prove  from the definition).

\begin{prop}
Let $\gamma=\{c_i\}_{i=1}^n$ and $\beta=\{b_i\}_{i=1}^n$. 
 If $\gamma$ majorizes $\beta$, then  $\sum_{i=1}^nc_i^2\geq \sum_{i=1}^nb_i^2$, and the inequality is strict if the majorization is strict.
\end{prop}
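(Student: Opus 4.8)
The plan is to recognize this as the special case $f(x)=x^2$ of the general principle that a convex function applied to a majorizing sequence yields a larger sum, with strict convexity of $x^2$ forcing strict inequality under strict majorization. Rather than invoke Karamata's inequality, I would prove it directly from the definition by summation by parts, which also makes the strict case transparent. First, set $S_k=\sum_{i=1}^k c_i$ and $T_k=\sum_{i=1}^k b_i$, and let $D_k=S_k-T_k$. The hypotheses say exactly that $D_k\ge 0$ for all $k$, that $D_0=0$, and that $D_n=0$ (equal total sums); strict majorization means $D_k>0$ for at least one $k$, which, since $c_k=S_k-S_{k-1}$ and likewise for $b$, is equivalent to $c\ne b$ as sorted sequences.

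Next I would factor $\sum_{i=1}^n c_i^2-\sum_{i=1}^n b_i^2=\sum_{i=1}^n (c_i-b_i)(c_i+b_i)$ and apply Abel summation to the right-hand side, treating $c_i-b_i$ as the weights (whose partial sums are the $D_k$) against $x_i:=c_i+b_i$. This yields the identity $\sum_{i=1}^n c_i^2-\sum_{i=1}^n b_i^2 = D_n x_n + \sum_{i=1}^{n-1} D_i\,(x_i-x_{i+1})$, and since $D_n=0$ the boundary term drops out. Now each $D_i\ge 0$ by majorization, while each difference $x_i-x_{i+1}=(c_i-c_{i+1})+(b_i-b_{i+1})\ge 0$ because both sequences are arranged in nonincreasing order; hence every summand is nonnegative and $\sum_{i=1}^n c_i^2\ge\sum_{i=1}^n b_i^2$.

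The main obstacle is the strict case: it is not enough to exhibit a single index with $D_k>0$, because the corresponding gap $x_k-x_{k+1}$ might vanish, killing that term. To handle this I would argue by propagation. Assuming $c\ne b$, let $k$ be the least index with $c_k\ne b_k$; then $D_{k-1}=0$ forces $c_k>b_k$, so $D_k>0$. If the $k$-th summand were zero we would need $x_k=x_{k+1}$, which together with $c_k\ge c_{k+1}$ and $b_k\ge b_{k+1}$ forces $c_{k+1}=c_k>b_k=b_{k+1}$ and hence $D_{k+1}=2D_k>0$. Iterating, vanishing of all summands from index $k$ onward would force $c_k=\cdots=c_n>b_k=\cdots=b_n$, making $D_n>0$ and contradicting $D_n=0$. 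Therefore at least one summand is strictly positive and $\sum_{i=1}^n c_i^2>\sum_{i=1}^n b_i^2$. The only step needing care is getting the Abel summation indices and boundary terms right; the rest is monotonicity bookkeeping.
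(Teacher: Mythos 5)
Your proof is correct. Note that the paper does not actually supply an argument for this proposition at all: it simply labels it ``well known (and easy to prove from the definition),'' so there is nothing to compare against line by line; your write-up fills in exactly the details the authors chose to omit. The standard textbook route here is Karamata's inequality (convexity of $x^2$ plus majorization), which you mention and then deliberately avoid; your Abel-summation identity
\[
\sum_{i=1}^n c_i^2-\sum_{i=1}^n b_i^2 \;=\; D_n x_n+\sum_{i=1}^{n-1} D_i\,(x_i-x_{i+1}),
\qquad x_i=c_i+b_i,\ D_i=\sum_{j\le i}(c_j-b_j),
\]
is a clean self-contained substitute, and it correctly uses both hypotheses: $D_i\ge 0$ from majorization and $x_i-x_{i+1}\ge 0$ from the nonincreasing ordering of both sequences (with $D_n=0$ killing the boundary term). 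The part that genuinely needs care --- and where a lazier argument would have a gap --- is the strict case, since a single index with $D_k>0$ can sit next to a vanishing gap $x_k-x_{k+1}$. Your propagation argument handles this properly: vanishing summands force $c_k=\cdots=c_n$ and $b_k=\cdots=b_n$ with $c_k>b_k$, hence $D_n>0$, contradicting equality of the total sums. (The degenerate case where the first disagreement occurs at $k=n$ is also covered, since then $D_n=c_n-b_n>0$ is already a contradiction.) In short: correct, complete, and more informative than what the paper provides.
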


\begin{prop}\label{treeR2star}  Among trees on $n$ vertices, the star $K_{1,n-1}$ is the unique tree that attains the minimum R\'enyi $2$-entropy, and the path $P_n$ is the unique tree that attains the maximum R\'enyi $2$-entropy.
\end{prop}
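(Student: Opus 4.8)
The plan is to reduce the statement to comparing $\sum_{i=1}^n d_i^2$ over tree degree sequences and then invoke the majorization proposition stated just above. The key observation is that every tree on $n$ vertices has exactly $n-1$ edges, so $d_T = 2(n-1)$ is the \emph{same} for all such trees. Consequently, by \eqref{R2eq},
\[
\tr_2(T) = \frac{\sum_{i=1}^n d_i^2 + 2(n-1)}{\bigl(2(n-1)\bigr)^2}
\]
is a strictly increasing affine function of $\sum_{i=1}^n d_i^2$ with the same normalizing constants for every tree. Since $H_2(T) = -\log_2(\tr_2(T))$, minimizing $H_2$ over trees is equivalent to maximizing $\sum_i d_i^2$, and maximizing $H_2$ is equivalent to minimizing $\sum_i d_i^2$. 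Thus it suffices to identify the trees whose degree sequences extremize $\sum_i d_i^2$.

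First I would record that the degree sequence $(d_1,\dots,d_n)$ of any tree is a sequence of $n$ positive integers summing to $2(n-1)$, with at least one entry equal to $1$ (a tree has a leaf). Writing the sequence in nonincreasing order $c_1 \ge \cdots \ge c_n \ge 1$, I would show that the star degree sequence $(n-1,1,\dots,1)$ majorizes it: since the last $n-k$ entries are each at least $1$, we get $\sum_{i=1}^k c_i = 2(n-1) - \sum_{i=k+1}^n c_i \le 2(n-1)-(n-k) = n+k-2$, which is exactly the $k$-th partial sum of the star sequence. Moreover, unless the tree is the star, $c_2 \ge 2$, forcing $c_1 \le n-2$ and hence strict inequality at $k=1$. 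By the majorization proposition, $\sum_i d_i^2$ is then maximized uniquely by $K_{1,n-1}$, so the star uniquely minimizes $H_2$.

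Next I would show that the path degree sequence $(2,\dots,2,1,1)$ is majorized by every tree degree sequence. For $k \le n-2$ I claim $\sum_{i=1}^k c_i \ge 2k$: if not, then $c_k = 1$ (otherwise the first $k$ entries are each at least $2$), whence $c_{k+1}=\cdots=c_n=1$ and the total is at most $(2k-1)+(n-k) = n+k-1 < 2(n-1)$, contradicting $k \le n-2$. For $k=n-1$, using $c_n=1$ gives $\sum_{i=1}^{n-1} c_i = 2(n-1)-1 = 2n-3$, matching the path's partial sum. Each bound equals the corresponding partial sum of the path sequence, so the path is majorized by every tree; tracing the equality conditions shows the majorization is strict unless the tree is $P_n$. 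Again by the majorization proposition, $\sum_i d_i^2$ is minimized uniquely by $P_n$, so the path uniquely maximizes $H_2$.

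I expect the only delicate point to be the strictness and uniqueness bookkeeping—verifying that equality throughout the majorization forces the sequence to be exactly the star (respectively the path)—rather than the inequalities themselves, which follow cleanly from the leaf count and the fixed degree sum. Everything else is a direct application of the reduction to $\sum_i d_i^2$ together with the majorization result quoted above.
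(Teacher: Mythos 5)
Your proof is correct and follows essentially the same route as the paper: reduce to comparing $\sum_i d_i^2$ using the fact that all trees share the degree sum $2n-2$, then show the star's degree sequence strictly majorizes every other tree's and every tree's strictly majorizes the path's, invoking the quoted majorization proposition. Your verification of the path-side partial-sum inequalities is somewhat more explicit than the paper's (which appeals to the constant sequence being majorization-minimal), but the decomposition and key lemma are identical.
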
 
\bpf For fixed $n$, let $\gamma=\{d_i\}_{i=1}^n$ be the degree sequence of a tree $T$, in non-increasing order.  Since the degree sum for every tree is  equal to $2n-2$, it is enough to show that the degree sequence of $K_{1,n-1}$ strictly majorizes the degree sequence of any other tree and the degree sequence of any other tree strictly majorizes the degree sequence of $P_n$.

Since $1\leq d_i$ for all $i$ and $\sum_{i=1}^nd_i=2n-2$, \vspace{-4pt}
\[\sum_{i=1}^kd_i\leq (2n-2)-(n-k) \vspace{-4pt}\]
and $K_{1,n}$ is the only tree that attains all equality, so $H_2(K_{1,n})<H_2(T)$ for all trees $T$ except $K_{1,n}$ itself.  

On the other hand, every tree has at least two leaves, so $d_{n-1}=d_n=1$.  Under this condition, $P_n$ is the only graph such that $\{d_i\}_{i=1}^{n-2}$ is evenly distributed.  Hence every other sequence strictly majorizes the degree sequence of $P_n$, so $H_2(P_n)>H_2(T)$ for all trees $T\ne P_n$.
\epf

The next result is well known (and straightforward to prove).
\begin{lem}\label{avelem}
Let $\{s_i\}_{i=1}^k$ and $\{t_i\}_{i=1}^k$ be positive real numbers.  Then
\[\min_i\left\{\frac{s_i}{t_i}\right\}\leq 
\frac{\sum_{i=1}^ks_i}{\sum_{i=1}^kt_i}\leq 
\max_i\left\{\frac{s_i}{t_i}\right\}.\]
If the ratios $\frac{s_i}{t_i}$ are not constant, then both inequalities are strict.
\end{lem}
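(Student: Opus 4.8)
The plan is to reduce both inequalities to the elementary observation that each individual ratio is trapped between the global minimum and maximum, and then to clear denominators and sum. Write $m := \min_i \{s_i/t_i\}$ and $M := \max_i \{s_i/t_i\}$. For every index $i$ we have $m \le s_i/t_i \le M$, and since each $t_i > 0$ we may multiply through by $t_i$ to obtain $m\, t_i \le s_i \le M\, t_i$ without reversing any inequality.

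Summing these bounds over $i = 1, \dots, k$ gives $m \sum_{i=1}^k t_i \le \sum_{i=1}^k s_i \le M \sum_{i=1}^k t_i$. Because $\sum_{i=1}^k t_i > 0$, dividing through by this sum yields $m \le \frac{\sum_{i=1}^k s_i}{\sum_{i=1}^k t_i} \le M$, which is exactly the claimed two-sided bound.

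For the strictness claim I would argue one side at a time. Suppose the ratios $s_i/t_i$ are not all equal, so that $m < M$; in particular at least one ratio strictly exceeds $m$. At such an index the bound $s_i > m\, t_i$ is strict, while every remaining term still satisfies $s_j \ge m\, t_j$, so summing produces the strict inequality $\sum_{i=1}^k s_i > m \sum_{i=1}^k t_i$, and dividing by the positive quantity $\sum_{i=1}^k t_i$ gives $\frac{\sum s_i}{\sum t_i} > m$. The same reasoning applied at an index where $s_i/t_i < M$ gives $\frac{\sum s_i}{\sum t_i} < M$, establishing strictness of both inequalities.

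There is no substantive obstacle here; the only point requiring a little care is the strictness step, where one must verify that a non-constant family of ratios forces at least one strictly-above-minimum term (and likewise one strictly-below-maximum term), so that the summed inequality inherits strictness from a single coordinate. This is immediate once one observes that ``the ratios are not constant'' is equivalent to $m < M$, which guarantees the existence of such extreme-avoiding indices.
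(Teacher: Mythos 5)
Your proof is correct and complete: the clearing-of-denominators argument and the observation that non-constant ratios force $m < M$ (hence a strictly-above-minimum and a strictly-below-maximum index) are exactly what is needed. The paper itself gives no proof of this lemma, dismissing it as ``well known (and straightforward to prove),'' and your argument is precisely the standard one that remark alludes to, so there is nothing to reconcile.
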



\begin{lem}\label{addedgelem}
Let $G$ be a connected graph and $e\in E(\overline{G})$.  If $\tr_2(G)\leq \tr_2(K_{1,n-1})$, then $\tr_2(G+e)< \tr_2(K_{1,n-1})$.
\end{lem}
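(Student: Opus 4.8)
The plan is to track how $\tr_2$ changes when the edge $e=\{u,v\}$ is added and to recognize the new value as a \emph{mediant} of two ratios, one controlled by the hypothesis and the other by the non-adjacency of $u$ and $v$. Write $Q=\sum_{i=1}^n d_i^2$, so that $\tr_2(G)=\frac{Q+d_G}{d_G^2}$. Adding $e$ raises each of $d_u,d_v$ by one, so $d_G$ becomes $d_G+2$ and $Q$ becomes $Q+2(d_u+d_v)+2$. Hence
\[\tr_2(G+e)=\frac{(Q+d_G)+\big(2(d_u+d_v)+4\big)}{d_G^2+\big(4d_G+4\big)},\]
which is exactly the mediant of $\tr_2(G)=\frac{Q+d_G}{d_G^2}$ and $R:=\frac{2(d_u+d_v)+4}{4(d_G+1)}$. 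By Lemma \ref{avelem}, $\tr_2(G+e)$ lies between $\tr_2(G)$ and $R$, strictly unless these two ratios coincide.

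The heart of the argument is to show $R<\tr_2(K_{1,n-1})$. First I would establish the bound $d_u+d_v\le \tfrac12 d_G$: since $u\not\sim v$, every edge incident to $u$ or to $v$ has its other endpoint in $W:=V(G)\setminus\{u,v\}$, and these $d_u+d_v$ edges are distinct, so the degrees of the vertices of $W$ sum to at least $d_u+d_v$; adding $d_u+d_v$ for $u,v$ themselves gives $d_G\ge 2(d_u+d_v)$. Substituting $d_u+d_v\le\tfrac12 d_G$ yields $R\le \frac{d_G+4}{4(d_G+1)}$. Using the value $\tr_2(K_{1,n-1})=\frac{n+2}{4(n-1)}$ obtained from the degree sequence of the star, the desired inequality $\frac{d_G+4}{4(d_G+1)}<\frac{n+2}{4(n-1)}$ simplifies to $d_G>n-2$, which holds because a connected graph on $n$ vertices has $d_G=2\lvert E(G)\rvert\ge 2(n-1)$. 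Thus $R<\tr_2(K_{1,n-1})$.

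To finish, combine $R<\tr_2(K_{1,n-1})$ with the hypothesis $\tr_2(G)\le \tr_2(K_{1,n-1})$. If the ratios $\tr_2(G)$ and $R$ differ, Lemma \ref{avelem} gives $\tr_2(G+e)<\max\{\tr_2(G),R\}\le \tr_2(K_{1,n-1})$; if they are equal, then $\tr_2(G+e)=R<\tr_2(K_{1,n-1})$. Either way $\tr_2(G+e)<\tr_2(K_{1,n-1})$, as required.

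I expect the main obstacle to be identifying the right ratio to pair against $\tr_2(G)$ and, above all, proving the clean bound $d_u+d_v\le\tfrac12 d_G$. A naive estimate such as $d_u+d_v\le 2(n-2)$ is too weak in the intermediate-density range and does not force $R<\tr_2(K_{1,n-1})$, whereas the non-adjacency bound is exactly strong enough and, pleasantly, uses nothing about the hypothesis $\tr_2(G)\le\tr_2(K_{1,n-1})$ (that hypothesis enters only through the mediant at the very end). Handling the strictness in the two sub-cases is the only remaining subtlety.
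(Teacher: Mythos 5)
Your proof is correct and follows essentially the same route as the paper's: both write $\tr_2(G+e)$ as the mediant of $\tr_2(G)$ and the ratio $R=\frac{2(d_u+d_v)+4}{4(d_G+1)}$, bound $R$ strictly below $\tr_2(K_{1,n-1})$ using exactly the two facts $d_G\ge 2(d_u+d_v)$ (from non-adjacency of $u$ and $v$) and $d_G\ge 2(n-1)$ (from connectivity), and finish with Lemma \ref{avelem}. The only differences are cosmetic: the paper verifies $2a+2b+4<\bigl(1+\tfrac{3}{n-1}\bigr)(1+d_G)$ directly, whereas you first reduce to $\frac{d_G+4}{4(d_G+1)}<\frac{n+2}{4(n-1)}$, and you spell out the equal-ratios case of the mediant argument slightly more explicitly.
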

\begin{proof}
Assume that $e=uv$ with $\deg_G u=a$ and $\deg_G v=b$.  Let $\{d_i\}_{i=1}^n$ be the degree sequence of $G$.  Then 
\bea
\tr_2(G+e)&=&\frac{\left(2a+2b+2+\sum_{i=1}^nd_i^2\right)+\left(2+\sum_{i=1}^nd_i\right)}{\left(\sum_{i=1}^nd_i\right)^2+4\left(\sum_{i=1}^nd_i\right)+4}\\
&=&\frac{\left(\sum_{i=1}^nd_i^2+\sum_{i=1}^nd_i\right)+\left(2a+2b+4\right)}{\left(\sum_{i=1}^nd_i\right)^2+\left(4+4\sum_{i=1}^nd_i\right)}.
\eea
Next we show that 
\[\frac{2a+2b+4}{4+4\sum_{i=1}^nd_i}< \tr_2(K_{1,n-1})=\frac{1}{4}+\frac{3}{4(n-1)},\] by showing
\[2a+2b+4< \left(1+\frac{3}{n-1}\right)\left(1+\sum_{i=1}^nd_i\right).\]
Since $G$ must have at least $a+b$ edges, $\sum_{i=1}^nd_i\geq 2a+2b$; also, since $G$ is connected, $\sum_{i=1}^nd_i\geq 2(n-1)$.  Thus
\bea
\left(1+\frac{3}{n-1}\right)\left(1+\sum_{i=1}^nd_i\right)&=& 1+\left(\sum_{i=1}^nd_i\right)+\frac{3}{n-1}+\left(\frac{3}{n-1}\sum_{i=1}^nd_i\right)\\
&>& 1+2a+2b+\frac{3\cdot 2(n-1)}{n-1}\\
&>&2a+2b+4.\vspace{-3pt}
\eea
Now by Lemma \ref{avelem} and the assumption $\tr_2(G)\leq \tr_2(K_{1,n-1})$, we know $\tr_2(G+e)< \tr_2(K_{1,n-1})$.
\end{proof}

\begin{thm}\label{2Renstarmin}
Let $G$ be a connected graph on $n$ vertices other than $K_{1,n-1}$.  Then 
\[H_2(K_{1,n-1})< H_2(G).\]
\end{thm}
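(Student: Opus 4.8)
The plan is to reduce the statement to the equivalent inequality $\tr_2(G) < \tr_2(K_{1,n-1})$. Since $H_2(G) = -\log_2(\tr_2(G))$ and $-\log_2$ is strictly decreasing, the desired bound $H_2(K_{1,n-1}) < H_2(G)$ holds if and only if $\tr_2(G) < \tr_2(K_{1,n-1})$. I would then prove this by building $G$ up from a spanning tree, adding one edge at a time and invoking Lemma \ref{addedgelem} at each step.

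First I would dispose of the case where $G$ is itself a tree. If $G$ is a tree and $G \ne K_{1,n-1}$, then Proposition \ref{treeR2star} shows directly that $\tr_2(G) < \tr_2(K_{1,n-1})$ (because $K_{1,n-1}$ is the unique tree of maximal $\tr_2$, equivalently minimal $H_2$), so the conclusion is immediate.

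Now suppose $G$ is not a tree, so $G$ has $m \ge n$ edges. I would choose a spanning tree $T$ of $G$ and write $G = T + e_1 + \cdots + e_k$, where $k = m - (n-1) \ge 1$ and each $e_j \in E(\overline{T + e_1 + \cdots + e_{j-1}})$. By Proposition \ref{treeR2star}, $\tr_2(T) \le \tr_2(K_{1,n-1})$. Since $T$ is connected, Lemma \ref{addedgelem} applies to the addition of $e_1$ and gives $\tr_2(T + e_1) < \tr_2(K_{1,n-1})$. Each intermediate graph $T + e_1 + \cdots + e_j$ contains the spanning tree $T$ and is therefore connected, and by construction satisfies $\tr_2 \le \tr_2(K_{1,n-1})$; hence Lemma \ref{addedgelem} may be reapplied at each successive edge addition. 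After all $k$ additions I obtain $\tr_2(G) < \tr_2(K_{1,n-1})$, which is exactly what is needed.

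The substantive content of the argument lies entirely in Lemma \ref{addedgelem} and Proposition \ref{treeR2star}; the theorem itself is a clean induction on the number of edges beyond a spanning tree. The only point requiring care is verifying that the hypotheses of Lemma \ref{addedgelem}, namely connectedness and the bound $\tr_2 \le \tr_2(K_{1,n-1})$, persist along the whole chain of edge additions. Both do so because every graph in the chain contains the fixed spanning tree $T$ and is therefore connected, and because Lemma \ref{addedgelem} propagates the bound (indeed makes it strict) from one graph to the next. I do not anticipate a genuine obstacle here beyond keeping this bookkeeping precise.
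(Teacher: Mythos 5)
Your proof is correct and takes essentially the same approach as the paper: the paper's own proof is a one-line invocation of Proposition \ref{treeR2star} and Lemma \ref{addedgelem} applied to a spanning tree of $G$. Your version simply makes explicit the induction on edges beyond the spanning tree and the bookkeeping (connectedness and persistence of the bound $\tr_2 \le \tr_2(K_{1,n-1})$) that the paper leaves implicit.
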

\begin{proof}
Since every connected graph has a spanning tree as a subgraph, by Theorem \ref{treeR2star} and Lemma \ref{addedgelem} we have $\tr_2(G)< \tr_2(K_{1,n-1})$.  Consequently, $H_2(K_{1,n-1})< H_2(G)$.
\end{proof}


\section{Comparison of von Neumann entropy and graph operations and parameters}\label{scompare}

In this section we examine the effect on von Neumann entropy of adding an edge and show von Neumann entropy is not comparable to many graph parameters. The next result shows that adding an edge is able to decrease von Neumann entropy slightly, providing a negative answer to Question \ref{addedgeQ}, which was first asked  in \cite{PS08}.  It is straightforward to verify.

\begin{prop}\label{addedgelowerent}
  Let  $v$ and $u$ be the two vertices of degree $n-2$ in $K_{2,n-2}$, and define the edge $e=vu$.  Then:
  \begin{enumerate}
  \item $\spec(\rho(K_{2,n-2})) = \left\{\frac{n}{4n-8}, \frac{n-2}{4n-8}, \frac{1}{2n-4}^{(n-3)}, 0\right\}$ and\\
  $\spec(\rho(K_{2,n-2}+e)) = \left\{\frac{n}{4n-6}^{(2)}, \frac{1}{2n-3}^{(n-3)}, 0\right\}$
\item $S(K_{2,n-2}) = \frac 1 2 + \frac n{4 n - 8}\log_2\frac{4 n - 8}n + 
   \frac{n - 3}{2 n - 4} \log_2( 2 n - 4)$ and\\ 
  $S(K_{2,n-2}+e) =  \frac n{2 n - 3}\log_2\frac{4 n - 6}n + 
   \frac{n - 3}{2 n - 3} \log_2( 2 n - 3)$
\end{enumerate}
For $n\ge 5$, $S(K_{2,n-2})>S(K_{2,n-2}+e)$.  \end{prop}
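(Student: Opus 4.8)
The plan is to treat parts (1) and (2) as the computational backbone and reduce the final inequality to the positivity of a single explicit function of $n$. First I would verify the two spectra. The spectrum $\spec(\rho(K_{2,n-2}))$ comes directly from the stated Laplacian spectrum of $K_{a,b}$ with $a=2$, $b=n-2$, after dividing by $\tr L = 4n-8$. For $K_{2,n-2}+e$, the point is that this graph is the join $K_2 \vee \overline{K_{n-2}}$ (the two hubs become adjacent while the other $n-2$ vertices stay independent), so its Laplacian spectrum $\{n^{(2)}, 2^{(n-3)}, 0\}$ follows from the standard formula for the Laplacian eigenvalues of a join; dividing by $\tr L = 4n-6$ gives the claimed $\spec(\rho(K_{2,n-2}+e))$. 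The entropy formulas in (2) then follow by substituting these eigenvalues into the definition of $S$.

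For the inequality, set $\Phi(n) := S(K_{2,n-2}) - S(K_{2,n-2}+e)$ using the closed forms from (2); the goal is $\Phi(n) > 0$ for $n \ge 5$. The key preliminary step is to simplify $\Phi$ by writing each logarithm in terms of $\log_2 n$ plus a bounded correction, e.g.\ $\log_2(2n-4) = \log_2 n + \log_2(2 - 4/n)$. After collecting terms, the dominant $\tfrac12\log_2 n$ growth present in \emph{both} $S(K_{2,n-2})$ and $S(K_{2,n-2}+e)$ cancels exactly, leaving
\[
\Phi(n) = \frac{n-3}{2(n-2)(2n-3)}\log_2 n + R(n) + L(n),
\]
where $R(n) = \tfrac12 + \tfrac{n}{4(n-2)} - \tfrac{n}{2n-3}$ is rational and $L(n) = \tfrac34\log_2(2-4/n) - \log_2(2-3/n)$. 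In particular $R(n) \to \tfrac14$ and $L(n) \to -\tfrac14$, while the leading log term tends to $0$, so $\Phi(n) \to 0$.

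Because the limit is $0$, positivity cannot be read off from a nonzero limiting value, and this is the main obstacle: the two entropies agree to all orders that are easy to see, and the sign of the vanishing difference must be pinned down by a second-order estimate. Expanding shows $R(n)+L(n) = -\tfrac{1}{4n} + O(n^{-2})$, whereas the displayed log term equals $\tfrac{1}{4n}\log_2 n + O(n^{-2}\log n)$, so $\Phi(n) = \tfrac{1}{4n}\log_2(n/2) + O(n^{-2}\log n)$, which is positive to leading order. I would make this rigorous by bounding $R(n)+L(n)$ below by an explicit negative quantity of size $\Theta(1/n)$ and the log term below by a positive quantity of the same order, concluding $\Phi(n) > 0$ for all $n$ beyond an explicit threshold $N_0$, and then checking the finitely many remaining values $5 \le n < N_0$ by direct substitution into the formulas of (2). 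Alternatively, one can verify that $\Phi$, viewed as a differentiable function on $[6,\infty)$, is strictly decreasing with limit $0$, hence positive, and handle $n=5$ by hand. The delicate point throughout is that the decisive contribution is the $\Theta(n^{-1}\log n)$ term rather than the $\Theta(n^{-1})$ rational part, so the estimates must retain the logarithmic factor rather than discarding it as lower order.
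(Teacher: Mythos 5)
Your computational backbone is correct: the two spectra (via the known $K_{a,b}$ Laplacian spectrum and the join formula for $K_2 \vee \overline{K_{n-2}}$), the entropy formulas, and — I checked the algebra — your exact decomposition
\[
\Phi(n) = \frac{n-3}{2(n-2)(2n-3)}\log_2 n + R(n) + L(n),
\qquad R(n) = \tfrac12 + \tfrac{n}{4(n-2)} - \tfrac{n}{2n-3},
\]
with $L(n) = \tfrac34\log_2(2-4/n) - \log_2(2-3/n)$, as well as the asymptotics $\Phi(n) = \tfrac{1}{4n}\log_2(n/2) + O(n^{-2}\log n)$. The paper itself offers no argument at all here (it says only ``It is straightforward to verify''), so there is no proof to compare against; yours is a genuine proof modulo the deferred explicit estimates, and those are routine. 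In fact your own decomposition can be tightened into a clean closed form: one computes exactly $R(n) = \tfrac14 - \tfrac{n-3}{2(n-2)(2n-3)}$, whence
\[
\Phi(n) = \frac{n-3}{2(n-2)(2n-3)}\,\log_2\!\left(\frac n2\right) \;-\; D(n),
\qquad D(n) := \log_2\!\left(1-\tfrac{3}{2n}\right) - \tfrac34\log_2\!\left(1-\tfrac2n\right),
\]
and the elementary bounds $\ln(1-x)\le -x$ and $-\ln(1-x)\le x/(1-x)$ give $D(n) \le \tfrac{3}{n(n-2)\ln 2}$, which is $O(n^{-2})$; the inequality $\Phi(n)>0$ then reduces to $\tfrac{n(n-3)}{2(2n-3)}\ln(n/2) > 3$, true for all $n\ge 10$, leaving $n=5,\dots,9$ to a finite check. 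This confirms your main route closes with a small threshold.

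One concrete flaw: your proposed alternative — that $\Phi$ is strictly decreasing on $[6,\infty)$ with limit $0$ — is false. Numerically $\Phi(6)\approx 0.042$, $\Phi(8)\approx 0.052$, $\Phi(10)\approx 0.053$, $\Phi(12)\approx 0.051$: the function increases up to roughly $n\approx 10$ before decaying (consistent with the leading term $\tfrac1{4n}\log_2(n/2)$, whose lower-order corrections push the maximum well past $2e$). So that variant of the argument would fail as stated; stick with the explicit-bound-plus-finite-check route, which is sound.
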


Proposition \ref{addedgelowerent} gives a family of graphs $K_{2,n-2}$ such that the ratio of $S(K_{2,n-2}+e)/S(K_{2,n-2})$ to $\frac{d_G}{d_G+2}$ goes to $1$ as $n$ goes to infinity;  thus in the asymptotic sense the inequality is tight.

On the other hand, an examination of the proof   \cite[Proposition 3.1]{OP93} for density matrices and its extension to graphs in \cite{PS08} shows that the inequalities in Theorem  \ref{addedge} are strict unless the density matrices of $G$ and $H$ are identical, which cannot happen for non-identical graphs (isomorphism does not suffice).  Therefore, for any graphs $G$ and $H$  with disjoint edge sets, the inequalities in Theorem \ref{addedge} are always strict.


Inspired by ``algebraic connectivity augmentation'' of a graph, the computational complexity of which is explored in \cite{MA08}, we define the following decision problem.
\begin{NPprob}
\textsc{EntropyAugmentation}\\
Input: \emph{A graph $G=(V,E)$, a non-negative integer $k$, a positive real number $x\in\mathbb{R}^+$}.\\
Output: \emph{YES if and only if there exists a subset $A\in E(\overline{G})$ of size $|A|\le k$ such that the von Neumann entropy of the augmented graph $S((V,E+A))\ge x$.}
\end{NPprob}
Since algebraic connectivity augmentation is NP-complete, we suggest that by similar reasoning it may be possible to prove that this problem is NP-hard. Its inclusion in NP is of course trivial, the certificate being the edge $e$ that ``augments'' the entropy by the required amount. We leave the following question open.
\begin{qstn} Is \textsc{EntropyAugmentation} an NP-complete decision problem?
\end{qstn}

{We have tried} to get von Neumann entropy to behave in concert with other graph parameters for a fixed number of vertices and edges.  For example, it was suggested  that $\match(G)<\match(H)$ implies $S(G)<S(H)$, where $\match(G)$ is the matching number, but this is not true (see Example \ref{mSno} below).  Von Neumann entropy and diameter  are noncomparable (see Example \ref{diamSno} below), and von Neumann entropy and maximum degree  are also noncomparable (see Example \ref{maxSno} below). 

\begin{ex}\label{mSno}  
Let $G_1$ and $G_2$ be the graphs shown in Figure \ref{fig:mSno}.  Then $\match(G_1)=2< 3=\match(G_2)$, but $S(G_1)\approx 1.94466 > 1.94188 \approx S(G_2)$.  Examples with the  reverse relation are easy to find, such as $\match(K_{1,3})=1< 2=\match(P_4)$ and $S(K_{1,3})\approx 1.25163<  1.31888\approx S(P_4)$. \vspace{-5pt}
\begin{figure}[!ht]
\begin{center}
\scalebox{.4}{\includegraphics{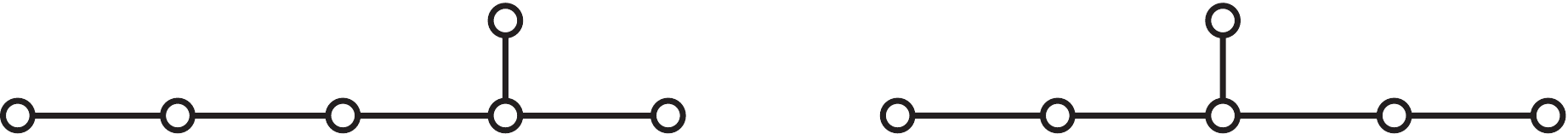}}\\ \vspace{-2pt}
$\null$\qquad\qquad\qquad $G_1$ \qquad\qquad\qquad  \qquad\qquad$G_2$\qquad\qquad\qquad$\null$
\caption{  $\match(G_1)< \match(G_2)$ and $S(G_1) >  S(G_2)$} \label{fig:mSno}\vspace{-15pt}
 \end{center}
 \end{figure}
\end{ex} 

\begin{ex}\label{diamSno}  Let $G_1$ and $G_2$ be the graphs shown in Figure \ref{fig:diamSno}.  Then $\diam(G_1)=4< 5=\diam(G_2)$, but $S(G_1)\approx2.37406 >2.35254\approx S(G_2)$.  Examples with the  reverse relation are easy to find, such as  $\diam(K_{1,n-1})=2< 3=\diam(P_4)$ and $S(K_{1,3})<  S(P_4)$. \vspace{-3pt}
\begin{figure}[!ht]
\begin{center}
\scalebox{.4}{\includegraphics{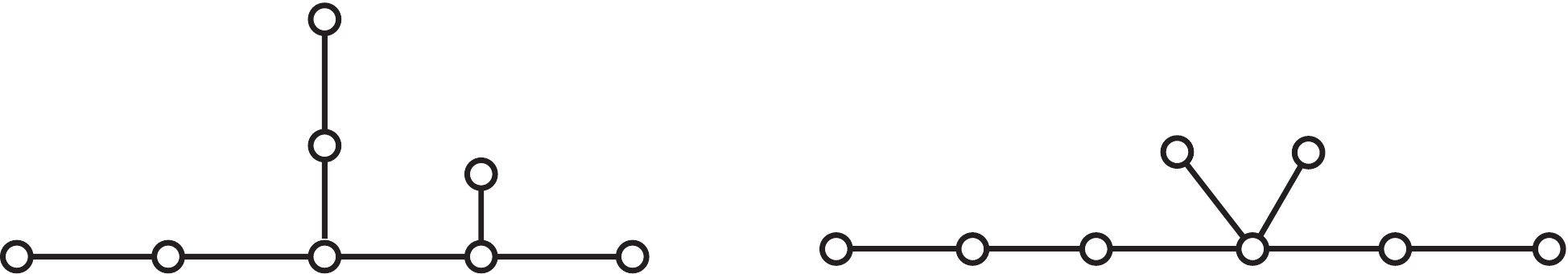}} \\\ \vspace{-2pt}
$\null$\qquad\qquad\qquad $G_1$ \qquad\qquad\qquad  \qquad\qquad$G_2$\qquad\qquad\qquad$\null$
\caption{  $\diam(G_1)< \diam(G_2)$ and $S(G_1) >  S(G_2)$}  \label{fig:diamSno}\vspace{-10pt}
 \end{center}
 \end{figure}
\end{ex} 

\begin{ex}\label{maxSno}  Let $G_1$ and $G_2$ be the graphs shown in Figure \ref{fig:maxSno}.  Then $\Delta(G_1)=4< 5=\Delta(G_2)$, but $S(G_1)\approx2.26678 <2.27741\approx S(G_2)$.  Examples with the  reverse relation are easy to find, such as  $\Delta(K_{1,n-1})=n-1>2=\Delta(P_n)$ and $S(K_{1,n-1})<  S(P_n)$, for $n\ge 4$. 

\begin{figure}[!ht]
\begin{center}
\scalebox{.4}{\includegraphics{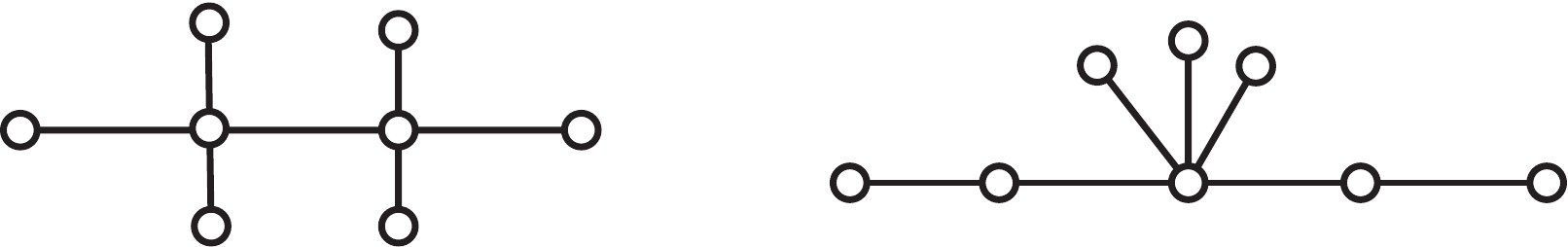}} \\\vspace{-2pt}
$\null$\qquad\qquad $G_1$ \qquad\qquad\qquad \qquad$G_2$\qquad\qquad$\null$
\caption{$\Delta(G_1)<\Delta(G_2)$ and $S(G_1) <  S(G_2)$} \label{fig:maxSno}\vspace{-15pt}
 \end{center}
 \end{figure}
\end{ex} 

Early in the development of spectral graph theory it was asked whether there exist nonisomorphic {\em cospectral} graphs, i.e., graphs having the same spectrum (for a particular matrix associated with the graph).  For each of the matrices associated with a graph, such as the adjacency and Laplacian matrices, nonisomorphic cospectral have been found.  Thus it is natural to ask whether there exist  noncospectral graphs having the same von Neumann entropy, i.e., {\em coentropy} graphs.  A search with {\em Sage}  produced numerous examples of order eight coentropy graphs having different spectra, including those in Example \ref{coent}.

\begin{ex}\label{coent}  Let $G$ a be the graph shown in Figure \ref{fig:coent}.  Then $S(G)=\log_214-\frac{4}{7}\log_2 8 =S(K_{2,6})$, but $\spec(\rho(G))=\{\frac 1 3,\frac 1 {6}^{(2)}, \frac 1 {8}^{(2)},\frac 1 {24}^{(2)},0\}$ whereas $\spec(\rho(K_{2,6}))=\left\{\frac 1 3,\frac 1 4,\frac 1 {12}^{(5)},0\right\}$.   \end{ex} 

\begin{figure}[!ht]
\begin{center}
\scalebox{.4}{\includegraphics{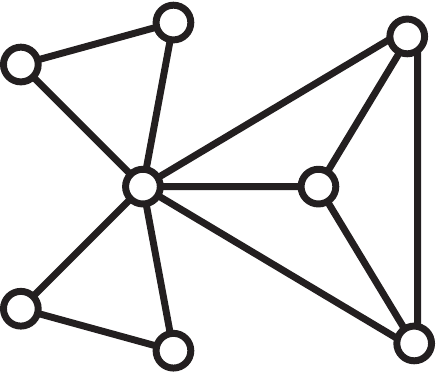}} \\\vspace{-2pt}
\caption{A graph $G$ that has the same von Neumann entropy as $K_{2,6}$ but a different spectrum} \label{fig:coent}\vspace{-10pt}
 \end{center}
 \end{figure}


 \section{Conclusion}

The behavior of von Neumann entropy is challenging to understand.  While many rules, such as `adding an edge raises entropy' work `most of the time,' as we saw in Proposition \ref{addedgelowerent} adding an edge can decrease von Neumann entropy. Thus the R\'enyi-Quantum Star Test, which works for almost all graphs, seems natural for entropy.  Understanding those graphs that fail this test may help to prove Conjecture \ref{Neumannmin}.

\begin{problem}\label{failRQST} Characterize graphs that fail the R\'enyi-Quantum Star Test.  \end{problem}
We make the following observations on graphs of order at most eight that fail the R\'enyi-Quantum Star Test:  \vspace{-3pt}
\ben
\item All those that fail have a leaf (degree one vertex).  \vspace{-3pt}
\item All those that fail are planar.\vspace{-3pt}
\een
\vspace{-3pt} Another approach to prove  Conjecture \ref{Neumannmin} would be to establish Conjecture \ref{Renyimin}.\medskip

As noted in Section \ref{scompare} we have not managed to find an interesting parameter that has nice correlation with (and is not trivially related to)  the von Neumann entropy, i.e. a parameter $\beta$ such that for any two graphs $G$ and $H$, $\beta(G)>\beta(H)$ implies $S(G)>S(H)$.

\begin{problem}\label{graphparam} Identify some interesting graph parameter(s) $\beta(G)$ such that $\beta(G)>\beta(H)$ implies $S(G)>S(H)$.
\end{problem}


\end{document}